\newtheorem{theorem}{Theorem}
\newtheorem{definition}[theorem]{Definition}
\newtheorem{lemma}[theorem]{Lemma}
\newtheorem{conjecture}[theorem]{Conjecture}
\def\qed{\hbox{${\vcenter{\vbox{		 %HOLLOW SQUARE
   \hrule height 0.4pt\hbox{\vrule width 0.5pt height 6pt
   \kern5pt\vrule width 0.5pt}\hrule height 0.4pt}}}$}}
\def\eps{\varepsilon}
\def\diag{\mathop{\rm diag}}
\newcommand{\ste}[1]{\textcolor{red}{#1}}
\begin{document}

\makeatletter
\def\ps@pprintTitle{%
  \let\@oddhead\@empty
  \let\@evenhead\@empty
  \let\@oddfoot\@empty
  \let\@evenfoot\@oddfoot
}
\makeatother

\begin{frontmatter}

\title{Global stability of SAIRS epidemic models}

\author[inst1,inst3]{Stefania Ottaviano}
\author[inst2]{Mattia Sensi}
\author[inst3]{Sara Sottile}

\affiliation[inst1]{organization={University of Trento, Dept.~of Civil, Environmental and Mechanical Engineering},%Department and Organization
            addressline={Via Mesiano, 77}, 
            city={Trento},
            postcode={38123}, 
            state={Italy}}
\affiliation[inst2]{organization={TU Delft, Network Architectures and Services Group},%Department and Organization
            addressline={Mekelweg 4}, 
            city={Delft},
            postcode={2628CD}, 
            state={The Netherlands}}
\affiliation[inst3]{organization={University of Trento, Dept. of Mathematics},%Department and Organization
            addressline={Via Sommarive 14}, 
            city={Povo - Trento},
            postcode={38123}, 
            state={Italy}}
%\journal{ }
\begin{abstract}
We study an SAIRS-type epidemic model with vaccination, where the role of asymptomatic and symptomatic infectious individuals are explicitly considered in the transmission patterns of the disease. 
We provide a global stability analysis for the model. We determine the value of the basic reproduction number $\mathcal{R}_0$ and prove that the disease-free equilibrium is globally asymptotically stable if $\mathcal{R}_0<1$ and unstable if $\mathcal{R}_0>1$, condition under which a positive endemic equilibrium exists.
We investigate the global stability of the endemic equilibrium for some variations of the original model under study and answer to an open problem proposed in Ansumali et al. \cite{ansumali2020modelling}.
In the case of the SAIRS model without vaccination, we prove the global asymptotic stability of the disease-free equilibrium also when $\mathcal{R}_0=1$. We provide a thorough numerical exploration of our model, to validate our analytical results.
 \end{abstract}
\begin{comment}

%%Research highlights
\begin{highlights}
\item We provide a global stability analysis for an SAIRS epidemic model with vaccination.
\item The role of both asymptomatic and symptomatic individuals is taken into account.
\item The threshold condition for the stability of the model is identified.
\item We analyze the behavior of the model when $\mathcal{R}_0 = 1$ for the model without vaccination.
\item We provide an in-depth numerical exploration of the model, to validate our analytical results.
\end{highlights}

\end{comment}
\begin{keyword}

 Susceptible--Asymptomatic infected--symptomatic Infected--Recovered--Susceptible \sep Vaccination \sep Basic Reproduction Number \sep Lyapunov functions \sep Global asymptotic stability \sep Geometric approach
 
%% PACS codes here, in the form: \PACS code \sep code
%\PACS 0000 \sep 1111
%% MSC codes here, in the form: \MSC code \sep code
%% or \MSC[2008] code \sep code (2000 is the default)
\begin{comment}
\MSC[2010] 34A34 \sep 34D20 \sep 34D23 \sep 37N25 \sep 92D30

\end{comment}
\end{keyword}

\end{frontmatter}
\section{Introduction}

% \ste{The global asymptotic stability of the SAIR(S?) model with $\beta_A \neq \beta_I$, and $\delta_A \neq \delta_I$ is still open} %pointed out that the global asymptotic stability of the endemic equilibrium of the SAIR-type model with vital dynamics, when $\beta_A \neq \beta_I$ and $\delta_A \neq \delta_I$ was still an open problem

% The recent Covid-19 pandemic has shown the need to develop and study effective mathematical
%epidemic models, by incorporating new characteristic ingredients. 
The recent Covid-19 pandemic has demonstrated to what extend the study of mathematical models of infectious disease is crucial to provide particularly effective
tools to help policy-makers contain the spread of the disease.
 %Their evolution and persistence result from complex interactions between individual units,
%disease characteristics and control policies, which makes mathematical modelling a particularly effective
%tool for studying infectious diseases. 
Many large scale data-driven
simulations have been used to examine and forecast aspects of the current epidemic spreading \cite{aleta2020evaluation,gatto2020spread}, as well as in other past epidemics \cite{backer2016spatiotemporal,ferrari2008dynamics,tizzoni2012real}. However,
the study of theoretical effective epidemic models able to catch the salient transmission
patterns of an epidemic, but that are yet mathematical tractable,
offers essential insight to understand the qualitative behavior of the epidemic, and provides useful information for control policies.

A peculiar, yet crucial feature of the recent Covid-19 pandemic is that ``asymptomatic" individuals, despite showing no symptoms, are able to transmit the infection (see e.g., \cite{day2020covid,mizumoto2020estimating,oran2020prevalence,oran2021proportion}, where a considerable fraction of SARS-Cov-2 infections have been attributed to asymptomatic individuals). This is one of the main aspect that has allowed the virus to circulate widely in the population, since asymptomatic cases often remain unidentified, and presumably have more contacts than symptomatic cases, since lack of symptoms often implies a lack of quarantine. Hence, the contribution of the so called ``silent spreaders'' to the infection transmission dynamics should be considered in mathematical epidemic models \cite{robinson2013model}.

Models that incorporate an asymptomatic compartment already exist in literature \cite{debarre2007effect,kemper1978effects,stilianakis1998emergence}, but have not been analytically studied as thoroughly as more famous compartmental models.
%Subclinic al infections are typically modelled using two approaches . The first separates the infectious population into asymptomatic and symptomati c states immedia tely following the onset of infectiousnes s [4–6]. However, not all diseases follow such a straightfo rward path and, in particular, the relative duration s of
%the latent and incubation periods should be considered. For certain diseases, a more appropriate model may allow for a preclinical state where an asymptom atic infectious state precedes the onset of symptoms
In this work, we consider an SAIRS (Susceptible-Asymptomatic infected-symptomatic Infected-Recovered-Susceptible) model based on the one proposed in \cite[Sec. 2]{robinson2013model}, in which the authors provide only a local stability analysis. 
An SAIR-type model is studied in \cite{ansumali2020modelling} with application to SARS-CoV-2. 
After a global stability analysis of the model, the authors present a method to estimate the parameters. They apply the estimation method to Covid-related data from several countries, demonstrating that the predicted epidemic trajectories closely match actual data.
The global stability analysis in \cite{ansumali2020modelling} regards  only a simplified version of the model in \cite{robinson2013model}: first, recovered people do not lose their immunity; moreover, the infection rates of the asymptomatic and symptomatic individuals are equal, as well as their recovery rates, while in \cite{robinson2013model} these parameters are considered to be potentially different.  
 
Thus, the main scope of our work is to provide a global stability analysis of the model proposed in \cite{robinson2013model}, and for some variations thereof. In addition, we include in our model the possibility of vaccination. In the investigation of global stability, we answer an open problem left in \cite{ansumali2020modelling}. In particular, we study the global asymptotic stability (GAS) of the disease-free equilibrium (DFE) and provide
results related to the global asymptotic stability of the endemic equilibrium (EE) for many variations of the model, as we will explain in detail in Sec.~\ref{outline}.

The rigorous proof of global stability, especially for the positive endemic equilibrium, becomes a challenging mathematical problem for many disease models due to their complexity and high dimension \cite{shuai2013global}.
  
The classical, and most commonly used method for GAS analysis is provided by the Lyapunov stability theorem and LaSalle’s invariance principle. These approaches are successfully
applied, for example, to the SIR, SEIR and SIRS models (see, e.g. \cite{korobeinikov2006lyapunov,mena1992dynamic,shuai2013global}). 
Others techniques have appeared in literature, and were successfully applied to global stability arguments for various epidemic models. For example, the Li–Muldowney geometric approach \cite{li1996geometric,li2000dynamics} was used to determine the global asymptotic stability of the SEIR and SEIRS models \cite{li1999global,li1995global,van1999global}, of some epidemic models with bilinear incidence \cite{buonomo2008use}, as well as of SIR and SEIR epidemic models with information dependent vaccination \cite{buonomo2008global,buonomo2013modeling}. Applications of Li–Muldowney geometric approach can also be found in population
dynamics \cite{lu2012geometric}.

Unlike the more famous and studied epidemic models, much less attention has been paid to the SAIR(S)-type models.
Thus, we think that a deeper understanding of these kind of models is needed, and could prove to be very useful in the epidemiological field. Indeed, in various communicable diseases, such as influenza, cholera, shigella, Covid-19, an understanding of the infection transmission by asymptomatic individuals may be crucial in determining the overall pattern of the epidemic dynamics \cite{kemper1978effects, nelson2009cholera}.

In our model, the total population $N$ is partitioned into four compartments, namely $S$, $A$, $I$, $R$, which represent the fraction of Susceptible, Asymptomatic infected, symptomatic Infected and Recovered individuals, respectively, such that $N=S+A+I+R$. 
The infection can be transmitted to a susceptible through a contact with either an asymptomatic infected individual, at rate $\beta_A$, or a symptomatic, at rate $\beta_I$. This aspect differentiates an SAIR-type model from the more used and studied SEIR-type model, where once infected a susceptible individual enters an intermediate stage called ``Exposed'' (E), but a contact between a person in state $E$ and one in state $S$ does not lead to an infection.

In our model instead, once infected, all
susceptible individuals enter an asymptomatic state, indicating in any case a
delay between infection and symptom onset. We include in the asymptomatic class both individuals who will never develop the symptoms and pre-symptomatic who will eventually become symptomatic. The pre-symptomatic phase seems to have a relevant role in the transmission: for example, in the case of Covid-19, empirical evidence shows that the serial interval tends to be shorter than the incubation period, suggesting that a relevant proportion of secondary transmission can occur prior to symptoms onset \cite{gatto2020spread}; the importance of the pre-symptomatic phase in the transmission is underlined also for other diseases, such as dengue \cite{wiwanitkit2010unusual}, and H1N1 influenza \cite{gu2011pandemic}.

From the asymptomatic compartment, an
individual can either progress to the class of symptomatic infectious $I$, at rate $\alpha$,
or recover without ever developing symptoms, at rate $\delta_A$. An infected individuals with symptoms can recover at a rate $\delta_I$.
We assume that the recovered individuals do not obtain a long-life immunity and can return to the susceptible state  after an average time $1/\gamma$. We also assume that a proportion $\nu$ of susceptible individuals receive a dose of vaccine which grants them a temporary immunity.
We do not add a compartment for the vaccinated individuals,
not distinguishing the vaccine-induced immunity from the natural one acquired after recovery from the virus.
Moreover, we consider the vital dynamics of the entire population and, for simplicity, we assume that the rate of births and deaths are the same, equal to $\mu$; we do not distinguish between natural deaths and disease related deaths.

\subsection{Outline and main results}\label{outline}
In Sec.~\ref{SAIRS_sec}, we present the system of equations for the SAIRS model with vaccination, providing its positive invariant set.
In Sec.~\ref{extinction_sec}, we determine the value of the basic reproduction number $\mathcal{R}_0$ and prove that if $\mathcal{R}_0 <1$, the DFE is GAS.

In Sec.~\ref{GAS_sec}, we discuss the uniform persistence of the disease, the existence and uniqueness of the endemic equilibrium, and we investigate its stability properties. In particular, first we provide the local asymptotic stability of the EE, then we investigate its global asymptotic stability for some variations of the original model under study. We start by considering the open problem left in \cite{ansumali2020modelling}, where the global stability of an SAIR model with vital dynamics is studied. The authors consider a disease which confers permanent immunity, meaning that the recovered individuals never return to the susceptible state. Moreover, they impose the restrictions $\beta_A =\beta_I$ and $\delta_A = \delta_I$, and leave the global stability of the endemic equilibrium when $\beta_A \neq \beta_I$ and $\delta_A \neq \delta_I$, as an open problem. Thus, in Sec. \ref{glob_sair}, we directly solve the open problem left in \cite{ansumali2020modelling}, by considering an SAIR model (i.e., $\gamma=0$), with $\beta_A \neq \beta_I$ and $\delta_A \neq \delta_I$, including in addition the possibility of vaccination. We consider the basic reproduction number $\mathcal{R}_0$ for this model and prove that if $ \mathcal{R}_0>1$ the EE is GAS.
In Sec.~\ref{glob_sairs_equal}, we study the GAS of the EE for an SAIRS model (i.e., $\gamma \neq 0$) with vaccination, with the restrictions $\beta_A =\beta_I$ and $\delta_A = \delta_I$, proving that if $\mathcal{R}_0>1$ the EE is GAS. In Sec. \ref{glob_SAIRS_noequal}, we investigate the global stability of the SAIRS model, where $\beta_A \neq \beta_I$ and $\delta_A \neq \delta_I$, {i.e., the model proposed in \cite{robinson2013model}, with in addition the possibility of vaccination}. 
{In this case, we use a geometric approach to global stability for nonlinear autonomous systems due to Lu and Lu \cite{lu2017geometric}, that generalizes the criteria developed by Li and Muldowney \cite{li1996geometric,li2000dynamics}}. We prove that if $\mathcal{R}_0>1$ and $\beta_A < \delta_I$, the EE is GAS.

In Sec.~\ref{no_vax}, we are able to prove the GAS of the DFE also in the case $\mathcal{R}_0=1$, assuming that no vaccination campaign is in place.
In Sec.~\ref{num_analysis_sec}, we validate our analytical results via several numerical simulations and deeper explore the role of parameters. 

\section{The SAIRS model with vaccination}\label{SAIRS_sec}
We consider an extension of the SAIRS model presented in \cite{robinson2013model}. 
\begin{comment}The total population $N$ is divided into four compartments: susceptible $S$, asymptomatic infected $A$, symptomatic infected $I$ and recovered $R$.
We assume that all infections are acquired either from asymptomatic or symptomatic individuals \ste{at rate $\beta_A$ and $\beta_I$, respectively}, and once infected susceptible individuals enter in the asymptomatic compartment. Later, asymptomatic individuals may show symptoms after an average time $1/\alpha$ or recover after an average time $1/\delta_A$; it is also assumed that all infected individuals eventually recover and the immunity is not permanent. We assume that symptomatic individuals recover after an average time $1/\delta_I$ and that immunity is lost after an average time $1/\gamma$.
Moreover, we consider the vital dynamics of the entire population, assuming that deaths due to the disease are negligible.
We assume that the rate of birth and death are the same, and we indicate both with $\mu$; we do not distinguish between natural deaths and disease induced deaths.
\end{comment}
The system of ODEs which describes the model is given by
\begin{equation}\label{sairs_s}
\begin{split}
     \frac{d S(t)}{dt} &= \mu  - \bigg(\beta_A A(t) + \beta_I I(t)\bigg)S(t) -(\mu + \nu) S(t) +\gamma R(t),\\ 
     \frac{d A(t)}{dt} &=  \bigg(\beta_A A(t) + \beta_I I(t)\bigg)S(t) -(\alpha + \delta_A +\mu) A(t), \\ 
     \frac{d I(t)}{dt} &= \alpha A(t) - (\delta_I + \mu)I(t), \\ 
     \frac{d R(t)}{dt} &=  \delta_A A(t) +\delta_I I(t) + \nu S(t) - (\gamma + \mu)R(t),
     \end{split}
\end{equation}
with initial condition  $(S(0), A(0), I(0), R(0))$ belonging to the set
\begin{equation}\label{gamma_inv}
    \bar \Gamma=\{ (S, A, I, R) \in \mathbb R_+^{4}| S+ A+ I+R = 1\},
\end{equation}
where $\mathbb R_+^{4}$ is the non-negative orthant of $\mathbb R^{4}$. The flow diagram for system (\ref{sairs_s}) is given in Figure \ref{fig:SAIRS}.
\begin{figure}[h!]
			\centering
		\begin{tikzpicture}
		\node[draw,white,circle,thick,minimum size=.75cm] (smin) at (-0.1,2) {};
		\node[draw,white,circle,thick,minimum size=.75cm] (splu) at (0.1,2) {};
		\node[draw,white,circle,thick,minimum size=.75cm] (rmin) at (-0.1,0) {};
		\node[draw,white,circle,thick,minimum size=.75cm] (rplu) at (0.1,0) {};
		\node[draw,circle,thick,minimum size=.75cm] (s) at (0,2) {$S$};
		\node[draw,circle,thick,minimum size=.75cm] (a) at (3.5,2) {$A$};
		\node[draw,circle,thick,minimum size=.75cm] (i) at (3.5,0) {$I$};
		\node[draw,circle,thick,minimum size=.75cm] (r) at (0,0) {$R$};
		\draw[-{Latex[length=2.mm, width=1.5mm]},thick] (s)--(a) node[above, midway]{$(\beta_A A +\beta_I I)S$};
		\draw[-{Latex[length=2.mm, width=1.5mm]},thick] (a)--(i) node[right, midway]{$\alpha A$};
		\draw[-{Latex[length=2.mm, width=1.5mm]},thick] (splu)--(rplu) node[right, midway]{$\nu S$};
		\draw[-{Latex[length=2.mm, width=1.5mm]},thick] (rmin)--(smin) node[left, midway]{$\gamma R$};
		\draw[-{Latex[length=2.mm, width=1.5mm]},thick] (a)--(r) node[right, midway]{$\quad\delta_A A$};
		\draw[-{Latex[length=2.mm, width=1.5mm]},thick] (i)--(r) node[above, midway]{$\delta_I I$};
 		\draw[{Latex[length=2.mm, width=1.5mm]}-,thick]
 		(s)--++(-1.5,0) node[above,midway]{$\mu$};
 	 	\draw[-{Latex[length=2.mm, width=1.5mm]},thick]
 	 	(s)--++(0,1.5) node[left,midway]{$\mu S$};
		\draw[-{Latex[length=2.mm, width=1.5mm]},thick] (a)--++(1,1) node[above,midway]{$\mu A \quad$};
		\draw[-{Latex[length=2.mm, width=1.5mm]},thick] (i)--++(1,-1) node[above,midway]{$\quad\mu R$};
		\draw[-{Latex[length=2.mm, width=1.5mm]},thick] (r)--++(-1,-1) node[above,midway]{$\mu I \quad$};
		\end{tikzpicture}
			\caption{Flow diagram for system (\ref{sairs_s}).}
			\label{fig:SAIRS}
\end{figure}
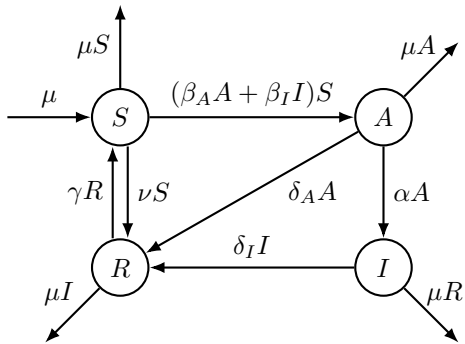\\
Assuming initial conditions in $\bar \Gamma$,  $S(t)+A(t)+I(t)+R(t)=1,$ for all $t\ge 0$; hence, system \eqref{sairs_s} is equivalent to the following three-dimensional dynamical system 
\begin{equation}\label{sairs3_s}
\begin{split}
    \frac{d S(t)}{dt} &= \mu -\bigg(\beta_A A(t) + \beta_I I(t)\bigg)S(t) -(\mu + \nu +\gamma) S(t) + \gamma(1-A(t)-I(t)), \\ 
      \frac{d A(t)}{dt} &=\bigg(\beta_A A(t) + \beta_I I(t)\bigg)S(t) -(\alpha + \delta_A +\mu) A(t),  \\
     \frac{d I(t)}{dt} &= \alpha A(t) - (\delta_I + \mu)I(t), 
\end{split}
\end{equation}
with initial condition  $(S(0), A(0), I(0))$ belonging to the set 
$$\Gamma=\{ (S, A, I) \in \mathbb R_+^{3}| S+ A+ I \leq 1\}.$$

System \eqref{sairs3_s} can be written in vector notation as

\begin{equation*}
    \frac{dx(t)}{dt}=f(x(t)),
\end{equation*}
where $x(t) = (S(t), A(t), I(t))$ and $f(x(t)) = (f_1(x(t)), f_2(x(t)), f_3(x(t)))$ is defined according to \eqref{sairs3_s}.

\begin{theorem}\label{invset}
  $\Gamma$ is positively invariant for system \eqref{sairs3_s}. That is, for all initial values $x(0) \in \Gamma$, the solution $x(t)$ 
  of \eqref{sairs3_s} will remain in $\Gamma$ for all $t >0$.
 \end{theorem}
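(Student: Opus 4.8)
The plan is to establish invariance through a boundary sub-tangentiality argument. The set $\Gamma$ is the compact polytope cut out in $\mathbb{R}^3$ by the four constraints $S\ge 0$, $A\ge 0$, $I\ge 0$ and $S+A+I\le 1$, so its boundary is the union of the four faces $\Gamma\cap\{S=0\}$, $\Gamma\cap\{A=0\}$, $\Gamma\cap\{I=0\}$ and $\Gamma\cap\{S+A+I=1\}$. Since the vector field $f$ in \eqref{sairs3_s} is polynomial, hence locally Lipschitz, solutions exist and are unique locally; I would show that on each of these faces $f$ points into $\Gamma$ or is tangent to it, so that a trajectory issued from $\Gamma$ cannot cross $\partial\Gamma$ outward, and then conclude by the classical Nagumo-type invariance criterion for closed sets. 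At the edges and vertices of $\Gamma$ one uses that the contingent tangent cone of the polytope is the intersection of the inward half-spaces of the faces meeting there, so the four face checks suffice. As a by-product, trajectories stay in the compact set $\Gamma$ and therefore exist for all $t>0$.

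The first three checks are immediate from \eqref{sairs3_s}. On $\{S=0\}$ one has $f_1=\mu+\gamma(1-A-I)\ge\mu>0$, because $A+I\le S+A+I\le 1$ on $\Gamma$; on $\{A=0\}$ one has $f_2=\beta_I S I\ge 0$; on $\{I=0\}$ one has $f_3=\alpha A\ge 0$. Hence along each coordinate hyperplane the flow never drives the vanishing coordinate negative. For the face $\{S+A+I=1\}$, whose outward normal is $(1,1,1)$, I would add the three equations: the incidence terms $(\beta_A A+\beta_I I)S$ cancel between $f_1$ and $f_2$, and after using the relation $S+A+I=1$ the remaining terms combine into
\[
f_1+f_2+f_3=-\nu S-\delta_A A-\delta_I I\le 0 ,
\]
so the flow cannot push $S+A+I$ above $1$. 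Combining the four estimates shows $f$ is sub-tangent to $\Gamma$ at every point of $\partial\Gamma$, which yields the claim.

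I do not expect a genuine obstacle here: this is a routine positive-invariance statement. The only points requiring a little care are the algebraic simplification on the face $S+A+I=1$ — checking that after the cancellations and the substitution the surviving terms really add up to the manifestly non-positive quantity $-\nu S-\delta_A A-\delta_I I$ — and phrasing the invariance criterion so that it applies at the non-smooth part of $\partial\Gamma$. One may also bypass the general theorem by elementary comparison arguments: from the last two equations, as long as the remaining coordinates stay non-negative one has $\dot A\ge-(\alpha+\delta_A+\mu)A$ and $\dot I\ge-(\delta_I+\mu)I$, hence $A,I\ge 0$; then $\dot S\ge-\bigl(\beta_A A+\beta_I I+\mu+\nu+\gamma\bigr)S$ on $\Gamma$ gives $S\ge 0$; and, writing $P=S+A+I$, on $\Gamma$ one has $\dot P=(\mu+\gamma)(1-P)-\nu S-\delta_A A-\delta_I I\le(\mu+\gamma)(1-P)$, so $P(0)\le 1$ forces $P(t)\le 1$ for all $t>0$ — the circular dependence among these bounds being handled, as usual, by running the argument on the maximal interval on which the trajectory remains in $\Gamma$.
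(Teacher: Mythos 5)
Your argument is correct and follows essentially the same route as the paper: you check sub-tangentiality of the vector field on each of the four faces of the polytope $\Gamma$ (with the same computations, including the cancellation of the incidence terms giving $f_1+f_2+f_3=-\nu S-\delta_A A-\delta_I I$ on the face $S+A+I=1$) and invoke a Nagumo/Yorke-type invariance criterion. The extra remarks on edges via tangent cones and the alternative comparison argument are welcome but not needed beyond what the paper does.
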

 \begin{proof}
A compact set $C$ is invariant for the system $d x(t)/dt= f(x(t))$ if at each point $y \in \partial \Delta$ (the boundary of $C$), the vector $f(y)$ is tangent or pointing into the set \cite{yorke1967invariance}.

The boundary $\partial \Gamma$ consists of the following $4$ hyperplanes: 

\begin{align*}
H_1&=\{(S,A,I) \in \Gamma \; | \; S =0 \},  \qquad
H_2=\{((S,A,I) \in \Gamma \; | \; A =0 \},  \\
H_3&=\{(S,A,I) \in \Gamma \; | \; I=0 \},  \quad 
H_4=\{(S,A,I) \in \Gamma \; | \; S+A+I =1 \} \\
\end{align*}

whose respective outer normal vectors are:

\begin{align*}
\eta_1 =(-1, 0, 0), \qquad 
\eta_2 = (0,-1,0), \qquad  
\eta_3 = (0,0,-1), \qquad
\eta_4 = (1,1,1).
\end{align*}

Thus, let us consider a point $x \in \partial \Gamma$. To prove the statement, we distinguish among four cases.\\

\emph{Case 1:} $S=0$. Then, since $A+I \leq 1$

$$\langle f(x), \eta_1 \rangle = -\mu -\gamma(1-A-I) \leq 0.$$

\emph{Case 2:} $A=0$. Then, since $S\geq 0$, $I \geq 0$

$$\langle f(x), \eta_2 \rangle= -\beta_I I S\leq 0.$$

\emph{Case 3:} $I=0$. Then, since $A \geq 0$

$$\langle f(x), \eta_3 \rangle= -\alpha A \leq 0.$$

\emph{Case 4:} $S+A+I=1$. Then, since $S\geq 0$, $A \geq 0$, $I \geq 0$

$$\langle f(x), \eta_4 \rangle=-\nu S -\delta_A A-  \delta_I I \leq 0.$$

Thus, any solution that starts in $\partial \Gamma$ will remain inside $\Gamma$. 
\end{proof}

\section{Extinction}\label{extinction_sec}

In this section, we provide the value of the basic reproduction number, that is defined as the expected number of
secondary infections produced by an index case in a completely susceptible
population \cite{anderson1992infectious,diekmann2000mathematical}. This numerical value gives a measure of the potential for disease spread
within a population \cite{van2008further}.
Then, we investigate the stability properties of the disease-free equilibrium of the system \eqref{sairs3_s}, that is equal to

\begin{equation}\label{eq:DFE}
x_0= \left(S_0, A_0, I_0\right)=\left(\frac{\mu+\gamma}{\mu+\nu+\gamma}, 0,0\right).\end{equation}

\begin{lemma}\label{propR0}
The basic reproduction number $\mathcal{R}_0$ of \eqref{sairs3_s} is given by
\begin{equation}\label{R0}
\mathcal{R}_0  = \left ( \beta_A + \dfrac{\alpha \beta_I}{\delta_I + \mu}\right) \dfrac{\gamma + \mu}{(\alpha+\delta_A + \mu)(\nu + \gamma + \mu)}.
\end{equation}
\end{lemma}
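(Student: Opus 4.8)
The plan is to obtain $\mathcal{R}_0$ through the next-generation matrix construction (see, e.g., \cite{diekmann2000mathematical,van2008further}). The infected compartments of \eqref{sairs3_s} are $A$ and $I$, so I would write the vector field restricted to these two coordinates as $\mathcal{F}-\mathcal{V}$, where
\[
\mathcal{F}(x) = \begin{pmatrix}(\beta_A A + \beta_I I)S\\ 0\end{pmatrix},
\qquad
\mathcal{V}(x) = \begin{pmatrix}(\alpha+\delta_A+\mu)A\\ -\alpha A + (\delta_I+\mu)I\end{pmatrix}.
\]
Here $\mathcal{F}$ collects the appearance of new infections --- all of which enter the asymptomatic class --- while $\mathcal{V}$ collects the remaining transfer terms (progression $A\to I$, recovery, mortality). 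Before computing, I would check the admissibility hypotheses of the method: $\mathcal{F}$ and $\mathcal{V}$ have the appropriate sign structure on $\Gamma$, $\mathcal{F}$ vanishes whenever $A=I=0$, no new infection is counted in the $I$-equation, and the disease-free dynamics $\dot x = -\mathcal{V}(x)$ has the DFE as an asymptotically stable equilibrium; all of these are immediate.

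Next I would linearize $\mathcal{F}$ and $\mathcal{V}$ at the DFE $x_0=(S_0,0,0)$ with $S_0 = \tfrac{\mu+\gamma}{\mu+\nu+\gamma}$, obtaining
\[
F = \begin{pmatrix}\beta_A S_0 & \beta_I S_0\\ 0 & 0\end{pmatrix},
\qquad
V = \begin{pmatrix}\alpha+\delta_A+\mu & 0\\ -\alpha & \delta_I+\mu\end{pmatrix},
\]
and then set $\mathcal{R}_0 = \rho(FV^{-1})$. Since $V$ is lower triangular it inverts explicitly, and because the second row of $F$ vanishes, $FV^{-1}$ is a rank-one matrix whose only nonzero eigenvalue is its $(1,1)$ entry. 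This yields $\mathcal{R}_0 = S_0\big(\beta_A + \tfrac{\alpha\beta_I}{\delta_I+\mu}\big)/(\alpha+\delta_A+\mu)$, which is exactly \eqref{R0} after substituting $S_0$.

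There is no genuine obstacle in this lemma; the only points meriting care are choosing the epidemiologically correct splitting of the $A$-equation (the bilinear incidence term $(\beta_A A + \beta_I I)S$ is entirely ``new infection,'' whereas the linear outflow $-(\alpha+\delta_A+\mu)A$ belongs to $\mathcal{V}$), and observing that the rank-one structure of $F$ makes the spectral radius trivial to read off. I would also note in passing that the van den Driessche--Watmough theorem then gives, as a by-product, local asymptotic stability of the DFE when $\mathcal{R}_0<1$ and instability when $\mathcal{R}_0>1$ --- statements to be strengthened to global results in the next section.
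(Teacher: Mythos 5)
Your proposal is correct and follows exactly the paper's own route: the same next-generation splitting of the $(A,I)$ subsystem into $\mathcal{F}$ and $\mathcal{V}$, the same linearization at the DFE, and the same computation of $\rho(FV^{-1})$ via the rank-one structure of $F$. The additional verification of the van den Driessche--Watmough admissibility hypotheses is a welcome refinement but does not change the argument.
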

\begin{proof}
Let us use the next generation matrix method \cite{van2002repnum} to find $\mathcal{R}_0$. System \eqref{sairs3_s} has 2 disease compartments, denoted by $A$ and $I$. We can write
\begin{align*}
\frac{d A(t)}{dt} &=  \mathcal{F}_1(S(t),A(t),I(t)) - \mathcal{V}_1(S(t),A(t),I(t)) , \\ \nonumber
     \frac{d I(t)}{dt} &= \mathcal{F}_2(S(t),A(t),I(t)) - \mathcal{V}_2(S(t),A(t),I(t)), \\ \nonumber
\end{align*}
where 
\begin{align*}
&\mathcal{F}_1(S(t),A(t),I(t)) = \bigg(\beta_A A(t) + \beta_I I(t)\bigg)S(t), \qquad  \mathcal{V}_1(S(t),A(t),I(t)) = (\alpha + \delta_A +\mu) A(t),\\ \nonumber 
&\mathcal{F}_2(S(t),A(t),I(t)) = 0, \qquad  \mathcal{V}_2(S(t),A(t),I(t)) = -\alpha A(t) + (\delta_I +\mu) I(t).
\end{align*}

Thus, we obtain
\begin{equation}
F = \left( \begin{matrix} \label{F}
\dfrac{\partial \mathcal{F}_1}{\partial A}(x_0) & \dfrac{\partial \mathcal{F}_1}{\partial I}(x_0)\\ \\
\dfrac{\partial \mathcal{F}_2}{\partial A}(x_0) & \dfrac{\partial \mathcal{F}_2}{\partial I}(x_0)
\end{matrix} \right) = \left( 
\begin{matrix}
\beta_A S_0 & \beta_I S_0 \\ 0 & 0
\end{matrix}\right),\qquad \text{where }S_0 = \dfrac{\gamma + \mu}{\gamma + \mu + \nu},
\end{equation}
\begin{equation}\label{V}
V = \left( \begin{matrix}
\dfrac{\partial \mathcal{V}_1}{\partial A}(x_0) & \dfrac{\partial \mathcal{V}_1}{\partial I}(x_0)\\ \\
\dfrac{\partial \mathcal{V}_2}{\partial A}(x_0) & \dfrac{\partial \mathcal{V}_2}{\partial I}(x_0)
\end{matrix} \right) = \left( 
\begin{matrix}
\alpha+\delta_A+\mu & 0 \\ -\alpha & \delta_I+\mu
\end{matrix}\right),
\end{equation}
from which
\begin{equation*}
 V^{-1} = \left(\begin{matrix}
\dfrac{1}{\alpha+\delta_A+\mu} & 0 \\ \\\dfrac{\alpha}{(\alpha+\delta_A+\mu)(\delta_I + \mu)} & \dfrac{1}{\delta_I + \mu}
\end{matrix}\right).
\end{equation*}
The next generation matrix is defined as $M := FV^{-1}$, that is 
\begin{equation*}
M = \left( \begin{matrix}
\dfrac{\beta_A S_0}{\alpha+\delta_A+\mu} + \dfrac{\alpha \beta_I S_0}{(\alpha+\delta_A+\mu)(\delta_I + \mu)}& \dfrac{\beta_I S_0}{\delta_I + \mu}\\ \\
0 & 0 
\end{matrix} \right).
\end{equation*}
The basic reproduction number $\mathcal{R}_0$ is defined as the spectral radius of $M$, denoted by $\rho(M)$. Thus, with a direct computation, we obtain \eqref{R0}. \end{proof}

In the following, we recall some results that we will use to prove the global asymptotic stability of the disease-free equilibrium $x_0$ of \eqref{sairs3_s}.

%\begin{definition}
%A square matrix is called a \emph{Metzler matrix} if all its off-diagonal components are non-negative, and it is called a \emph{Hurwitz matrix} if its eigenvalues all have negative real parts.
%\end{definition}

%\begin{lemma}\label{HM}
%Let $H$ a Hurwitz Metzler matrix and $D$ be a positive definite diagonal matrix. Then $DH$ is Metzler and Hurwitz.
%\end{lemma}
%\begin{proof}
%See \cite[Lemma 3]{yang2017heterogeneous}.
%\end{proof}

\begin{lemma}\label{spFV}
The matrix $(F-V)$  has a real spectrum. Moreover, if $\rho(FV^{-1}) <1$, all the eigenvalues of $(F-V)$ are negative. 
\end{lemma}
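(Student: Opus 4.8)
The plan is to argue directly from the explicit $2\times 2$ matrices $F$ and $V$ already computed in the proof of Lemma~\ref{propR0}. Writing
\[
F-V=\begin{pmatrix} \beta_A S_0-(\alpha+\delta_A+\mu) & \beta_I S_0 \\[2pt] \alpha & -(\delta_I+\mu)\end{pmatrix}=:\begin{pmatrix} a & b \\ c & d\end{pmatrix},
\]
I first note that the off-diagonal entries $b=\beta_I S_0\ge 0$ and $c=\alpha\ge 0$. The characteristic polynomial of a $2\times2$ matrix is $\lambda^2-(a+d)\lambda+(ad-bc)$, and its discriminant is $(a+d)^2-4(ad-bc)=(a-d)^2+4bc\ge 0$. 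Hence the two eigenvalues of $F-V$ are real, which proves the first assertion. (Equivalently, $F-V$ is a $2\times 2$ Metzler matrix, and such matrices always have real spectrum.)

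For the second assertion I would first translate the hypothesis $\rho(FV^{-1})<1$ into a condition on $\mathcal{R}_0$: since $\det F=0$, also $\det(FV^{-1})=0$, so the only possibly nonzero eigenvalue of $M=FV^{-1}$ is its trace, which by the explicit form of $M$ in the proof of Lemma~\ref{propR0} equals $\tfrac{S_0}{\alpha+\delta_A+\mu}\big(\beta_A+\tfrac{\alpha\beta_I}{\delta_I+\mu}\big)=\mathcal{R}_0\ge 0$; thus $\rho(FV^{-1})=\mathcal{R}_0$, and $\rho(FV^{-1})<1$ is exactly $\mathcal{R}_0<1$. Next I compute
\[
\det(F-V)=(\alpha+\delta_A+\mu)(\delta_I+\mu)-(\delta_I+\mu)\,S_0\Big(\beta_A+\tfrac{\alpha\beta_I}{\delta_I+\mu}\Big),
\]
so dividing by $(\delta_I+\mu)>0$ and using the formula $\mathcal{R}_0=\tfrac{S_0}{\alpha+\delta_A+\mu}\big(\beta_A+\tfrac{\alpha\beta_I}{\delta_I+\mu}\big)$ gives $\det(F-V)>0\iff\mathcal{R}_0<1$.

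Finally, since the spectrum of $F-V$ is real, both eigenvalues are negative if and only if $\operatorname{tr}(F-V)<0$ and $\det(F-V)>0$. The determinant condition is already established; for the trace, $\mathcal{R}_0<1$ gives $\beta_A S_0\le S_0\big(\beta_A+\tfrac{\alpha\beta_I}{\delta_I+\mu}\big)<\alpha+\delta_A+\mu$ (using $\alpha,\beta_I,S_0\ge 0$), hence $\operatorname{tr}(F-V)=\big(\beta_A S_0-(\alpha+\delta_A+\mu)\big)-(\delta_I+\mu)<0$, and the claim follows. I do not expect a genuine obstacle here — everything is explicit in dimension two — the only point requiring a little care is the sign bookkeeping that ties $\det(F-V)$ and $\operatorname{tr}(F-V)$ to $\mathcal{R}_0$; in arbitrary dimension one would instead invoke the general equivalence $\rho(FV^{-1})<1\iff s(F-V)<0$ valid for $F\ge 0$ and $V$ a non-singular M-matrix (see \cite{van2002repnum}), which applies since $V$ is lower triangular with positive diagonal.
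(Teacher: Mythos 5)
Your proof is correct. The first assertion (real spectrum) is established exactly as in the paper: you observe that the off-diagonal entries $\beta_I S_0$ and $\alpha$ of $F-V$ are non-negative, so the discriminant $(a-d)^2+4bc$ of the characteristic polynomial is non-negative. For the second assertion, however, you take a genuinely different route. The paper simply invokes \cite[Lemma 2]{van2008further}, which gives the general equivalence $\rho(FV^{-1})<1 \iff s(F-V)<0$ for $F\ge 0$ and $V$ a non-singular M-matrix (the very fact you mention in your closing remark). You instead make the argument self-contained in dimension two: you identify $\rho(FV^{-1})=\operatorname{tr}(FV^{-1})=\mathcal{R}_0$ (legitimate, since $\det F=0$ forces one eigenvalue of $FV^{-1}$ to vanish and the trace is non-negative), verify the clean identity $\det(F-V)=(\delta_I+\mu)(\alpha+\delta_A+\mu)(1-\mathcal{R}_0)$, and check $\operatorname{tr}(F-V)<0$ via $\beta_A S_0\le S_0\bigl(\beta_A+\tfrac{\alpha\beta_I}{\delta_I+\mu}\bigr)<\alpha+\delta_A+\mu$; combined with the already-proved realness of the spectrum, the trace--determinant criterion then yields two negative eigenvalues. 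All the sign bookkeeping checks out. What your version buys is transparency and independence from the external lemma, and it makes explicit the useful fact that $\det(F-V)$ changes sign exactly at $\mathcal{R}_0=1$; what the paper's citation buys is brevity and an argument that ports unchanged to models with more than two infected compartments.
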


\begin{proof}
From \eqref{F} and \eqref{V}
\begin{equation}\label{FminusV}
 (F-V) = \left( 
\begin{matrix}
\beta_A S_0 -(\alpha+\delta_A+\mu) & \beta_I S_0  \\ \alpha & -(\delta_I+\mu)
\end{matrix}\right).
\end{equation}
%Let $P=\diag(\sqrt{\beta_I(r) S_0}, \sqrt{\alpha})$. Then, $P^1 \cdot(F-V)\cdot P$ is symmetric and hence its spectrum is real. Thus, since $F-V$ is similar to a matrix with real spectrum, its spectrum is also real.
 Since $(F-V)$ is a $2 \times 2$ matrix whose off-diagonal elements have the same sign, it is easy to see that its eigenvalues are real. Indeed, for a generic matrix $A=\left( 
\begin{matrix}
a & b  \\ c & d
\end{matrix}\right)$ with $\text{sign}(b)=\text{sign}(c)$, the eigenvalues can be easily shown to be real by explicitly computing them:
$$
\lambda_{1,2}=\dfrac{(a+d)\pm \sqrt{(a-d)^2+4bc}}{2},
$$
and noticing that the radicand is the sum of two non-negative values.
Now, if $\rho(FV^{-1})=\mathcal{R}_0 <1$ all eigenvalues of $(F-V)$ are negative as a consequence of \cite[Lemma 2]{van2008further}.
\end{proof}

\begin{theorem}\label{locDFE}
The disease-free equilibrium $x_0$ of \eqref{sairs3_s} is locally asymptotically stable if $\mathcal{R}_0 <1$, and unstable if $\mathcal{R}_0>1$.
\end{theorem}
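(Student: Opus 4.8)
The plan is to linearise the three-dimensional system \eqref{sairs3_s} about $x_0$ and to determine the signs of the real parts of the eigenvalues of the Jacobian, exploiting the fact that the ``disease block'' of this Jacobian coincides with the matrix $F-V$ already displayed in \eqref{FminusV}.

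First I would compute the Jacobian matrix $J(x_0)$ of the vector field $f$ at the disease-free equilibrium $x_0=(S_0,0,0)$. Since the equations for $A$ and $I$ in \eqref{sairs3_s} contain no constant term and $A=I=0$ at $x_0$, the partial derivatives $\partial f_2/\partial S$ and $\partial f_3/\partial S$ vanish at $x_0$, so $J(x_0)$ is block lower triangular with respect to the splitting $S\,|\,(A,I)$: its $(1,1)$ entry equals $-(\mu+\nu+\gamma)$, while the $2\times2$ block acting on the $(A,I)$ coordinates is precisely
\[
F-V=\begin{pmatrix} \beta_A S_0-(\alpha+\delta_A+\mu) & \beta_I S_0 \\ \alpha & -(\delta_I+\mu)\end{pmatrix}.
\]
Consequently the spectrum of $J(x_0)$ is $\{-(\mu+\nu+\gamma)\}$ together with the two eigenvalues of $F-V$, and $-(\mu+\nu+\gamma)<0$ regardless of the parameters.

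If $\mathcal{R}_0<1$, Lemma \ref{spFV} gives that $F-V$ has real spectrum and, since $\rho(FV^{-1})=\mathcal{R}_0<1$, that all its eigenvalues are negative; hence every eigenvalue of $J(x_0)$ is negative and $x_0$ is locally asymptotically stable. If $\mathcal{R}_0>1$, I would compute the determinant of $F-V$ directly and simplify it by means of the expression \eqref{R0} for $\mathcal{R}_0$ (recalling $S_0=(\gamma+\mu)/(\nu+\gamma+\mu)$), obtaining
\[
\det(F-V)=(\alpha+\delta_A+\mu)(\delta_I+\mu)\,(1-\mathcal{R}_0)<0.
\]
Since the two eigenvalues of $F-V$ are real by Lemma \ref{spFV} and their product is negative, one of them is strictly positive; therefore $J(x_0)$ possesses an eigenvalue with positive real part and $x_0$ is unstable. (Instability for $\mathcal{R}_0>1$ also follows from the next-generation matrix theory in \cite{van2002repnum,van2008further}.)

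The argument is entirely elementary $2\times2$ linear algebra; the only points that require a little care are checking that the disease block of $J(x_0)$ is literally $F-V$, so that Lemma \ref{spFV} applies verbatim, and carrying out the determinant simplification so that its sign is seen to be governed by $1-\mathcal{R}_0$. I do not anticipate any substantial obstacle.
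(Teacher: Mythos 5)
Your proof is correct, but it takes a different route from the paper: the paper's entire proof of Theorem~\ref{locDFE} is a citation of \cite[Theorem 1]{van2008further}, the general next-generation-matrix stability theorem, whereas you verify the statement directly for this specific three-dimensional system. Your computation is sound: at $x_0$ the entries $\partial f_2/\partial S=\beta_A A+\beta_I I$ and $\partial f_3/\partial S=0$ indeed vanish, so $J(x_0)$ is block triangular (strictly speaking block \emph{upper} triangular in the ordering $(S,A,I)$, since the zeros sit below the $(1,1)$ entry --- a harmless terminological slip), its spectrum is $\{-(\mu+\nu+\gamma)\}$ together with the eigenvalues of $F-V$ from \eqref{FminusV}, and the determinant identity $\det(F-V)=(\alpha+\delta_A+\mu)(\delta_I+\mu)(1-\mathcal{R}_0)$ checks out against \eqref{R0} with $S_0=(\gamma+\mu)/(\nu+\gamma+\mu)$. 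Combined with Lemma~\ref{spFV} this gives both the stable and unstable cases. What the paper's approach buys is brevity and generality (the cited theorem covers any system satisfying the standard $\mathcal{F}$/$\mathcal{V}$ decomposition hypotheses, which Lemma~\ref{propR0} has implicitly set up); what your approach buys is self-containedness --- it makes the threshold role of $\mathcal{R}_0$ visible through an explicit $2\times 2$ determinant rather than through an appeal to the sign equivalence $s(F-V)<0\iff\rho(FV^{-1})<1$ proved abstractly in \cite{van2008further}. In effect you have re-derived the relevant special case of that theorem, which is a perfectly acceptable alternative.
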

\begin{proof}
See \cite[Theorem 1]{van2008further}.
\end{proof}

\begin{theorem}\label{glob_attr}
 The disease-free equilibrium $x_0$ of \eqref{sairs3_s} is globally asymptotically stable if $\mathcal{R}_0<1$.
\end{theorem}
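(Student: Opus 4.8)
The strategy is to combine the local asymptotic stability already obtained in Theorem~\ref{locDFE} with a proof of global attractivity of $x_0$ inside the compact positively invariant set $\Gamma$: since global asymptotic stability is exactly (Lyapunov) stability together with attractivity, it suffices to show that $x(t)\to x_0$ for every $x(0)\in\Gamma$.

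First I would establish the a priori bound $\limsup_{t\to\infty}S(t)\le S_0$. From the first equation of \eqref{sairs3_s}, using $A,I\ge 0$, one has $\dot S\le \mu+\gamma-(\mu+\nu+\gamma)S$; comparison with the scalar linear ODE $\dot y=\mu+\gamma-(\mu+\nu+\gamma)y$, whose solutions converge to $S_0=\frac{\mu+\gamma}{\mu+\nu+\gamma}$, yields the claim. Hence for every $\varepsilon>0$ there is $T_\varepsilon$ with $S(t)<S_0+\varepsilon$ for all $t\ge T_\varepsilon$.

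Next I would build a linear Lyapunov-type function for the infected compartments. Because $\mathcal{R}_0<1$, the open interval $\big(\tfrac{\beta_I S_0}{\delta_I+\mu},\ \tfrac{(\alpha+\delta_A+\mu)-\beta_A S_0}{\alpha}\big)$ is nonempty and has a positive right endpoint (both facts are, via the explicit formula \eqref{R0}, equivalent to $\mathcal{R}_0<1$); fix $c$ in it and set $V(A,I)=A+cI\ge 0$ on $\Gamma$. Differentiating along \eqref{sairs3_s} and writing $S=S_0+(S-S_0)$ gives
\begin{equation*}
\dot V=\big[\beta_A S_0-(\alpha+\delta_A+\mu)+c\alpha\big]A+\big[\beta_I S_0-c(\delta_I+\mu)\big]I+(\beta_A A+\beta_I I)(S-S_0),
\end{equation*}
where, by the choice of $c$, the two bracketed coefficients equal $-a_1$ and $-a_2$ with $a_1,a_2>0$. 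For $t\ge T_\varepsilon$ this yields $\dot V\le -(a_1-\varepsilon\beta_A)A-(a_2-\varepsilon\beta_I)I$, and choosing $\varepsilon$ small enough makes both coefficients positive, so $\dot V\le -\eta\,(A+I)\le -\eta'\,V$ for some $\eta,\eta'>0$. Therefore $V(t)\to 0$ (exponentially), i.e.\ $A(t),I(t)\to 0$. Finally, setting $u=S-S_0$ and using $\mu+\gamma=(\mu+\nu+\gamma)S_0$, one gets $\dot u=-(\mu+\nu+\gamma)u-(\beta_AA+\beta_II)(u+S_0)-\gamma(A+I)$, whose forcing term tends to $0$; a standard variation-of-constants (or asymptotically autonomous systems) argument gives $u(t)\to 0$. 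Thus $x(t)\to x_0$, and together with Theorem~\ref{locDFE} the global asymptotic stability follows.

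The main obstacle is the handling of the fact that the half-space $\{S\le S_0\}$ is not forward invariant, so the sign of $\dot V$ cannot be controlled on all of $\Gamma$; the remedy is to absorb the error term $(\beta_AA+\beta_II)(S-S_0)$ using the strict negativity of the coefficients $-a_1,-a_2$, which is available only because $c$ can be chosen in an \emph{open} interval, and the nonemptiness of that interval is precisely $\mathcal{R}_0<1$. Equivalently, one may argue more structurally: $F-V$ in \eqref{FminusV} is an irreducible Metzler matrix, so it has a strictly positive left eigenvector $w$ for its dominant eigenvalue, which is negative by Lemma~\ref{spFV}; then $L=w_1A+w_2I$ satisfies $\dot L=-\lambda_1 L-w_1(\beta_AA+\beta_II)(S_0-S)$ with $\lambda_1>0$, and the same $\varepsilon$-absorption applies.
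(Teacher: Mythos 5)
Your proposal is correct, and its middle step takes a genuinely different route from the paper. The paper bounds $(A,I)$ from above by the solution of the linear cooperative system $\dot w=(F_\eps-V_\eps)w$ and then invokes Lemma~\ref{spFV} (via \cite[Lemma 2]{van2008further}) to conclude that this comparison system has a negative real spectrum when $\mathcal{R}_0<1$, hence $w\to 0$ and so $A,I\to 0$; this implicitly relies on the quasimonotonicity of the linear system to justify the vector comparison. You instead build the explicit linear Lyapunov function $V=A+cI$, with $c$ chosen in an interval whose nonemptiness encodes $\mathcal{R}_0<1$, and obtain the differential inequality $\dot V\le-\eta' V$ directly after absorbing the error term $(\beta_AA+\beta_II)(S-S_0)$ for large $t$. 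As you note yourself, this amounts to constructing by hand a positive left eigenvector of the Metzler matrix $F-V$, so the two arguments are two faces of the same spectral fact; what your version buys is self-containedness (no appeal to Lemma~\ref{spFV}, to the cited spectral lemma, or to a comparison principle for systems) and an explicit exponential decay rate. Your final step ($S\to S_0$ via variation of constants for $u=S-S_0$ with a vanishing forcing term) also differs from the paper's second $\liminf$ comparison argument, but both are routine and correct. One cosmetic slip: the positivity of the right endpoint $\bigl((\alpha+\delta_A+\mu)-\beta_AS_0\bigr)/\alpha$ is \emph{implied} by $\mathcal{R}_0<1$ (since $\mathcal{R}_0\ge\beta_AS_0/(\alpha+\delta_A+\mu)$) but is not equivalent to it; only the nonemptiness of the interval is an equivalence. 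This does not affect the validity of the argument.
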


\begin{proof}
Since $\Gamma$ is an invariant set for \eqref{sairs3_s} and in view of Theorem \ref{locDFE}, it is sufficient to show that for all $x(0) \in \Gamma$

\begin{equation*}
\lim_{t \to \infty} A(t)=0, \qquad      \lim_{t \to \infty} I(t)=0, \qquad \text{and} \qquad \lim_{t \to \infty} S(t)= S_0,
\end{equation*}
with $S_0$ as in \eqref{eq:DFE}. From the first equation of \eqref{sairs3_s} follows that
\begin{equation*}
\frac{dS(t)}{dt} \leq \mu+ \gamma  -(\mu+\nu+\gamma) S(t).   
\end{equation*}
It easy to see that $S_0$ is a global asymptotically stable equilibrium for the comparison equation

\begin{equation*}
\frac{dy(t)}{dt}= \mu+ \gamma  -(\mu+\nu+\gamma) y(t).   
\end{equation*}
Then, for any $\eps>0$, there exists $\bar t>0$, such that for all $t \geq \bar t$, it holds

\begin{equation}\label{S0e}
    S(t) \leq S_0+ \eps,
\end{equation}
hence 
\begin{equation}\label{limS}
\limsup_{t \to \infty} S(t) \leq S_0.
\end{equation}
Now, from \eqref{S0e} and second and third equation of \eqref{sairs3_s}, we have that for $t \geq \bar t$

\begin{align*}
    \frac{d A(t)}{dt} & \leq \bigg(\beta_A A(t) + \beta_I I(t)\bigg)(S_0 + \eps) -(\alpha + \delta_A +\mu) A(t),  \\
     \frac{d I(t)}{dt} &= \alpha A(t) - (\delta_I + \mu)I(t). \nonumber 
    \end{align*}
Let us now consider the comparison system
\begin{align*}
    \frac{d w_1(t)}{dt} & = \bigg(\beta_A w_1(t) + \beta_I w_2(t)\bigg)(S_0 + \eps) -(\alpha + \delta_A +\mu) w_1(t),  \\
     \frac{d w_2(t)}{dt} &= \alpha w_1(t) - (\delta_I + \mu)w_2(t), \qquad w_1( \bar t)=A( \bar t),\quad w_2( \bar t)=I( \bar t), \nonumber 
    \end{align*}
that we can rewrite as
\begin{equation*}
    \frac{d w(t)}{dt}= (F_{\eps}-V_{\eps}) w(t),
\end{equation*}
where $w(t)=(w_1(t),w_2(t))^T$ and $F_{\eps}-V_{\eps}$ is the matrix in \eqref{FminusV}, computed in $x_0(\eps)=(S_0+\eps, 0,0)$. Let us note that if $\mathcal{R}_0=\rho(FV^{-1}) <1$, we can choose a sufficient small $\eps >0$ such that $\rho(F_{\eps}V_{\eps}^{-1}) <1$. Then, by applying Lemma \ref{spFV} to $ (F_{\eps}-V_{\eps})$, we obtain that it has a real spectrum and all its eigenvalues are negative. It follows that $\lim_{t \to \infty} w(t)=0$ , whatever the initial conditions are (see, e.g., \cite{perko1991linear}), from which 
$$\lim_{t \to \infty} A(t)=0, \qquad \text{and} \qquad \lim_{t \to \infty} I(t)=0.$$ %for all $(A(0),I(0))$
Now, for any $\eps>0$ there exists $\bar t_1$ such that for any $t> \bar t_1$, $I(t)<\eps$ and $A(t)<\eps$. So, for $t>\bar t_1$ we have
$$
\frac{d S(t)}{dt}\ge  \mu -\eps(\beta_A + \beta_I)S(t) -(\mu + \nu +\gamma) S(t) + \gamma(1-2\eps). 
$$

It easy to see that $\frac{\mu+\gamma(1-2\eps)}{\eps(\beta_A + \beta_I)+(\mu + \nu +\gamma)}$ is a global asymptotically stable equilibrium for the comparison equation
\begin{equation*}
\frac{dy(t)}{dt}=\mu -\eps(\beta_A + \beta_I)y(t) -(\mu + \nu +\gamma) y(t) + \gamma(1-2\eps).
\end{equation*}
Thus, for any $\zeta >0$, there exists $\bar t_2 >0$ such that for all $t \geq \bar t_2$,
$$S(t) \geq \frac{\mu+\gamma(1-2\eps)}{\eps(\beta_A + \beta_I)+(\mu + \nu +\gamma)} - \zeta.$$
Then, for any $\eps>0$, we have
$$
\liminf_{t\to\infty}S(t)\ge\frac{\mu+\gamma(1-2\eps)}{\eps(\beta_A + \beta_I)+(\mu + \nu +\gamma)}.
$$
Letting $\eps$ go to $0$, we have $\liminf_{t\to\infty}S(t)\ge S_0$, that combined with \eqref{limS} gives us
$$\lim_{t \to \infty} S(t)=S_0.$$\end{proof}

\section{Global stability of the endemic equilibrium}\label{GAS_sec}

In this section, we discuss the uniform persistence of the disease, the existence and uniqueness of an endemic equilibrium, and we investigate its stability properties. \\

We say that the disease is \emph{endemic} if both the asymptomatic and infected fractions in the population remains above a certain positive level for a sufficiently large time. The notion of uniform persistent can be used to represent and analyze the endemic scenario \cite{li1999global}. In the following, with the notation $\mathring \Theta$, we indicate the interior of a set $\Theta$.

\begin{definition}\label{persistence_def}

System \eqref{sairs3_s} is said to be \emph{uniformly persistent} if there exists a constant $0<\eps<1$ such that any solution $x(t)=(S(t),A(t),I(t))$ with $x(0) \in \mathring \Gamma $ satisfies 

\begin{equation}\label{pers}
\min \{ \liminf_{t \to \infty} S(t), \quad \liminf_{t \to \infty} A(t), \quad \liminf_{t \to \infty} I(t)\} \geq \eps.
\end{equation}

\end{definition}

To address the uniform persistence of our system, we need the following result.

\begin{lemma}\label{uniqdfe}
The DFE $x_0$ is the unique equilibrium of \eqref{sairs3_s} on $\partial \Gamma$.
\end{lemma}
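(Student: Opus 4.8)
The plan is to determine all equilibria of \eqref{sairs3_s} lying on the boundary $\partial\Gamma=H_1\cup H_2\cup H_3\cup H_4$ by solving $f_1=f_2=f_3=0$ there. First note that $x_0$ does lie on $\partial\Gamma$: since $A_0=I_0=0$ it belongs to $H_2\cap H_3$, and $0\le S_0\le 1$. So the real content is that there is no \emph{other} boundary equilibrium. A convenient first reduction: setting $f_3(x)=0$ gives $I=\frac{\alpha A}{\delta_I+\mu}$, so $A=0$ if and only if $I=0$. I would then split into the two cases $A=0$ and $A>0$.

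If $A=0$, then $I=0$ by the previous observation, and $f_1(x)=0$ collapses to $\mu+\gamma-(\mu+\nu+\gamma)S=0$, i.e. $S=S_0=\frac{\mu+\gamma}{\mu+\nu+\gamma}$ as in \eqref{eq:DFE}. Hence the only equilibrium with $A=0$ is $x_0$; this handles every point on the faces $H_2$ and $H_3$.

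If $A>0$, then also $I>0$, and $f_2(x)=0$ reads $(\beta_A A+\beta_I I)S=(\alpha+\delta_A+\mu)A>0$, which forces $S>0$. Thus $(S,A,I)$ lies in the open positive octant, so the only way for it to be on $\partial\Gamma$ is through the face $H_4=\{S+A+I=1\}$, equivalently $R:=1-S-A-I=0$. Here I would pass to the equivalent four-dimensional formulation \eqref{sairs_s}: an equilibrium of \eqref{sairs3_s} in $\bar\Gamma$ is an equilibrium of \eqref{sairs_s}, so its fourth equation yields $0=\delta_A A+\delta_I I+\nu S-(\gamma+\mu)R=\delta_A A+\delta_I I+\nu S$, which is impossible because $\delta_A A>0$ (alternatively, adding the three equations of \eqref{sairs3_s} at equilibrium and substituting $S=1-A-I$ produces the same identity $\nu S+\delta_A A+\delta_I I=0$). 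This contradiction rules out equilibria with $A>0$, so $x_0$ is the unique equilibrium on $\partial\Gamma$.

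The only step that needs a small idea is the face $H_4$: substituting $S+A+I=1$ directly into the three equations of \eqref{sairs3_s} is messy, whereas recognizing that $H_4$ is exactly $\{R=0\}$ and reading off the $R$-equation of the ambient system \eqref{sairs_s} makes the sign contradiction immediate. Everything else is straightforward substitution, and the argument only uses the standing positivity of the demographic and transition rates (in particular $\mu>0$, $\delta_A>0$, $\alpha>0$, $\delta_I+\mu>0$, and $\beta_A,\beta_I$ not both zero).
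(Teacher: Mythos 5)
Your proof is correct and follows essentially the same route as the paper's: a case analysis on the boundary faces using the equilibrium relations $I=\alpha A/(\delta_I+\mu)$, the collapse to $S=S_0$ when $A=I=0$, and the summed-equation identity $\nu S+\delta_A A+\delta_I I=0$ on $H_4$. The only difference is organizational — you split on $A=0$ versus $A>0$ rather than face by face, which covers the $S=0$ face implicitly — but the substance is identical.
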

\begin{proof}
 Let us assume that $\bar x=(\bar S, \bar A, \bar I)$ is an equilibrium of \eqref{sairs3_s} on $\partial \Gamma$. Then, there are three possibilities:\\

\noindent\emph{Case 1:} $\bar S=0$. It follows from the second equation of \eqref{sairs3_s} that $\bar A=0$ and, consequently, from the third equation that $\bar I = 0$. Then, from the first equation of \eqref{sairs3_s} we have $\gamma(\bar A+\bar I)={\mu+\gamma}>0$, and a contradiction occurs.\\

\noindent\emph{Case 2:} $\bar A=0$. It follows from the third equation of \eqref{sairs3_s} that $\bar I=0$, and from the first that $\bar S=S_0$.\\

\noindent\emph{Case 3:} $\bar I=0$. Analogously to Case 2, we find that $\bar A=0$ and $\bar S=S_0$.\\

\noindent\emph{Case 4:} $\bar S+\bar A+ \bar I=1$. By summing the equations in \eqref{sairs3_s}, we have $\delta_A \bar A + \delta_I \bar I + \nu \bar S =0$, a contradiction.\\

By combining the above discussions the statement follows. 
\end{proof}

\begin{theorem}\label{uniform_per}
If $\mathcal{R}_0>1$, %( \mathcal{R}_0(r)=1 iff)
system \eqref{sairs3_s} is uniformly persistent and
there exists at least one endemic equilibrium in $\mathring{\Gamma}$.
\end{theorem}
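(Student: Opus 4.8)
The plan is to establish uniform persistence first and then obtain the endemic equilibrium as a byproduct. Since $\Gamma$ is compact and positively invariant (Theorem~\ref{invset}), the system is point‑dissipative, so I would aim to verify the hypotheses of a standard uniform persistence theorem for flows near the boundary of a positively invariant set (of Butler--McGehee / Freedman--Ruan--Tang acyclicity type, in the form used for the SEIR model in \cite{li1999global}). Concretely, take $X=\Gamma$, $X_0=\mathring{\Gamma}$, $\partial X_0=\partial\Gamma$, and check: (a) $\mathring{\Gamma}$ is forward invariant; (b) the largest invariant set $M\subseteq\partial\Gamma$ is the single equilibrium $\{x_0\}$, which is isolated and acyclic; (c) $W^s(x_0)\cap\mathring{\Gamma}=\emptyset$.

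For (a) I would use the differential inequalities along \eqref{sairs3_s}: $\dot A\ge -(\alpha+\delta_A+\mu)A$ and $\dot I\ge -(\delta_I+\mu)I$ keep $A,I$ positive if they start positive, $\dot S\ge \mu+\gamma(1-A-I)>0$ on $\{S=0\}$, and $\frac{d}{dt}(S+A+I)=-\nu S-\delta_A A-\delta_I I<0$ on $\{S+A+I=1\}$, so $S+A+I<1$ is preserved. For (b): on $\{A=0\}$, $\dot A=\beta_I SI\ge0$, vanishing only when $SI=0$, so the only invariant subset of $\{A=0\}$ is $\{A=I=0\}$, on which the $S$‑equation is the scalar linear ODE $\dot S=\mu+\gamma-(\mu+\nu+\gamma)S$ whose only bounded complete orbit is $\{S=S_0\}$; the face $\{I=0\}$ is treated identically, and $\{S=0\}$, $\{S+A+I=1\}$ carry no invariant subset by the inequalities above. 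Hence $M=\{x_0\}$ (consistent with Lemma~\ref{uniqdfe}), and since $\mathcal R_0\neq1$ the eigenvalues at $x_0$ are $-(\mu+\nu+\gamma)<0$ together with the two nonzero real eigenvalues of $F-V$ (Lemma~\ref{spFV}), so $x_0$ is hyperbolic, hence isolated; it is acyclic because for $\mathcal R_0>1$ the Metzler, irreducible matrix $F-V$ has $s(F-V)>0$ with strictly positive Perron eigenvector, so $W^u(x_0)$ immediately leaves $\partial\Gamma$ and no homoclinic loop can lie in $\partial\Gamma$. For (c) I would show $x_0$ is a uniform weak repeller for $\mathring{\Gamma}$: with $\xi\gg0$ solving $(F-V)\xi=s(F-V)\xi$, if an interior orbit stayed near $x_0$ then, letting $S\to S_0$ and using \eqref{FminusV} near $x_0$, the functional $L=\xi_1A+\xi_2I$ would satisfy $\dot L\ge\tfrac12 s(F-V)\,L>0$ for large $t$ (a comparison argument in the spirit of the proof of Theorem~\ref{glob_attr}), contradicting $L(t)\to0$.

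Given (a)--(c), the persistence theorem yields a common $\eps>0$ with $\liminf_{t\to\infty}A(t)\ge\eps$ and $\liminf_{t\to\infty}I(t)\ge\eps$ for all $x(0)\in\mathring{\Gamma}$; combined with the uniform bound $\liminf_{t\to\infty}S(t)\ge \mu/(\beta_A+\beta_I+\mu+\nu+\gamma)>0$ coming from $\dot S\ge\mu-(\beta_A+\beta_I+\mu+\nu+\gamma)S$, this gives \eqref{pers}, i.e.\ uniform persistence. For the endemic equilibrium I would invoke the standard fact that a dissipative, uniformly persistent system on a compact set has an equilibrium in the interior: the flow restricted to $\mathring{\Gamma}$ has a nonempty compact global attractor contained in $\{S,A,I\ge\eps\}$, and a Brouwer fixed point / fixed point index argument (as in \cite{li1999global}) produces a rest point there, necessarily in $\mathring{\Gamma}$. (Alternatively one can solve the equilibrium system directly: $I^{*}=\tfrac{\alpha}{\delta_I+\mu}A^{*}$, $S^{*}=S_0/\mathcal R_0$, which forces $A^{*}>0$ exactly when $\mathcal R_0>1$, with $S^{*}+A^{*}+I^{*}<1$ automatic from the discarded $R$‑equation.)

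The main obstacle is step (c): converting the instability of $x_0$ into the global statement $W^s(x_0)\cap\mathring{\Gamma}=\emptyset$, and checking carefully that the chosen persistence theorem applies — in particular the isolation and acyclicity of $\{x_0\}$ relative to the boundary flow. The invariance computations in (a)--(b), the lower bound on $S$, and the passage from persistence to an interior equilibrium are routine.
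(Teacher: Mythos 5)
Your proposal is correct and follows essentially the same route as the paper: identify $\{x_0\}$ as the largest (isolated, acyclic) invariant set on $\partial\Gamma$, use the instability of $x_0$ for $\mathcal{R}_0>1$ to invoke the Freedman--Ruan--Tang/Li-type acyclicity persistence theorem, and then obtain the interior equilibrium from the standard ``uniform persistence plus compact positively invariant set'' fixed-point result. The paper delegates steps (a)--(c) to citations, whereas you verify them explicitly (and in doing so you correctly upgrade the paper's Lemma~\ref{uniqdfe}, which only identifies the boundary \emph{equilibria}, to a statement about the largest invariant set on $\partial\Gamma$, which is what the persistence theorem actually requires).
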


\begin{proof}
By Lemma \ref{uniqdfe}, the largest invariant set on $\partial \Gamma$ is the singleton $\{ x_0\}$, which is isolated. If $\mathcal{R}_0>1$, we know from Theorem \ref{locDFE} that $x_0$ is unstable. Then, by using \cite[Thm 20]{freedman1994uniform}, and similar arguments in \cite[Prop. 3.3]{li1999global}, we can assert that the instability of $x_0$ implies the uniform persistence of \eqref{sairs3_s}. The uniform persistence and the positive invariance of the compact set $\Gamma$ imply the existence of an endemic equilibrium in $\mathring{\Gamma}$ (see, e.g., \cite[Thm 2.8.6]{bhatia2006dynamical} or \cite[Thm. 2.2]{shuai2013global}).
\end{proof}

%The uniform persistence, because of the boundedness of $\Gamma$ is equivalent to the
%existence of a compact set in which is absorbing for system

\begin{lemma}\label{ex_ee}
  There exists an endemic equilibrium $x^* = (S^*,A^*,I^*)$ in $\mathring{\Gamma}$ for system \eqref{sairs3_s} if and only if $\mathcal{R}_0>1$. Furthermore, this equilibrium is unique.
\end{lemma}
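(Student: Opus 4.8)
The plan is to solve the equilibrium system of \eqref{sairs3_s} explicitly; all three assertions then follow by inspection. Put $b := \beta_A + \tfrac{\alpha\beta_I}{\delta_I+\mu}$. From the third equilibrium equation one gets $I^* = \tfrac{\alpha}{\delta_I+\mu}A^*$. Any equilibrium in $\mathring{\Gamma}$ must have $A^*>0$: indeed $A^*=0$ forces $I^*=0$ by the third equation and then $S^*=S_0$ by the first, i.e.\ the equilibrium is the DFE $x_0$, which lies on $\partial\Gamma$ (this is exactly Cases~2--3 of the proof of Lemma~\ref{uniqdfe}). Dividing the second equilibrium equation by $A^*>0$ then yields $b\,S^* = \alpha+\delta_A+\mu$, so $S^*$ is uniquely determined; comparing with \eqref{R0}--\eqref{eq:DFE} one checks the identity $S^* = S_0/\mathcal{R}_0$.

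Next I would substitute the relation $(\beta_A A^* + \beta_I I^*)S^* = (\alpha+\delta_A+\mu)A^*$ (from the second equation) together with $I^* = \tfrac{\alpha}{\delta_I+\mu}A^*$ into the first equilibrium equation. It becomes linear in $A^*$ and gives
\[
A^* \;=\; \frac{\mu+\gamma-(\mu+\nu+\gamma)S^*}{\,\alpha+\delta_A+\mu+\gamma\bigl(1+\tfrac{\alpha}{\delta_I+\mu}\bigr)\,}.
\]
The denominator is strictly positive, so $A^*>0$ if and only if $(\mu+\nu+\gamma)S^* < \mu+\gamma$, i.e.\ $S^* < S_0$, i.e.\ (using $S^*=S_0/\mathcal{R}_0$) if and only if $\mathcal{R}_0>1$. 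This simultaneously gives: non-existence of an endemic equilibrium when $\mathcal{R}_0\le 1$; existence of one when $\mathcal{R}_0>1$ (this direction is in any case already covered by Theorem~\ref{uniform_per}); and uniqueness, since $S^*$, then $A^*$, then $I^*$ are each forced to a single value.

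It remains to confirm that, when $\mathcal{R}_0>1$, the triple $(S^*,A^*,I^*)$ just produced genuinely lies in $\mathring{\Gamma}$, i.e.\ not merely in the affine plane $S+A+I=1$ glued to $\partial\Gamma$. Positivity of all three coordinates is already established; the only missing point is $S^*+A^*+I^*<1$. For this, set $R^*:=1-S^*-A^*-I^*$; then $(S^*,A^*,I^*,R^*)$ is an equilibrium of the full four-dimensional system \eqref{sairs_s}, and its fourth equation reads $(\gamma+\mu)R^* = \delta_A A^* + \delta_I I^* + \nu S^* > 0$, whence $R^*>0$ and so $S^*+A^*+I^*<1$.

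The whole argument is essentially a direct computation; the one step that deserves a little care is this last one -- checking that the algebraically forced equilibrium really sits in the interior of $\Gamma$ -- together with verifying that every link in the chain ``$A^*>0 \Leftrightarrow S^*<S_0 \Leftrightarrow \mathcal{R}_0>1$'' is a genuine equivalence rather than a one-way implication.
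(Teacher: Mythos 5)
Your proof is correct and follows essentially the same route as the paper: solve the equilibrium equations explicitly (third equation relates $A^*$ and $I^*$, second forces $S^*=S_0/\mathcal{R}_0$, first then determines the remaining unknown), and read off existence, positivity iff $\mathcal{R}_0>1$, and uniqueness from the resulting formulas. Your added verification that $S^*+A^*+I^*<1$ (via $R^*>0$ from the fourth equation of \eqref{sairs_s}) is a small extra care the paper's proof leaves implicit, but it does not change the argument.
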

\begin{proof}

Let us consider \eqref{sairs3_s}; we equate the right hand sides to 0, and assume $A^*,I^*\neq 0$. From the third equation we obtain
\begin{equation}\label{Astar}
    A^* = \dfrac{\delta_I + \mu}{\alpha}I^*,
\end{equation}
and replacing it in the second equation
\begin{equation*}
    \left(\beta_A \dfrac{\delta_I + \mu}{\alpha} + \beta_I\right) I^* S^* - (\alpha+\delta_A + \mu)\dfrac{\delta_I + \mu}{\alpha} I^* = 0.
\end{equation*}
Since $I^*\neq 0$, it follows that
\begin{equation}\label{Sstar}
    S^* = \dfrac{(\alpha + \delta_A+\mu)(\delta_I + \mu)}{\beta_A(\delta_I + \mu) + \beta_I \alpha}.
\end{equation}
Let us substitute the expressions \eqref{Astar} and \eqref{Sstar} in the first equation, then we obtain

\begin{equation*}
    \mu - \left(\beta_A \dfrac{\delta_I + \mu}{\alpha} + \beta_I\right) \dfrac{(\alpha + \delta_A+\mu)(\delta_I + \mu)}{\beta_A(\delta_I + \mu) + \beta_I \alpha} I^* - (\mu + \nu + \gamma ) \dfrac{(\alpha + \delta_A+\mu)(\delta_I + \mu)}{\beta_A(\delta_I + \mu) + \beta_I \alpha} + \gamma \left(1- \dfrac{\delta_I + \mu}{\alpha} I^* - I^* \right) = 0,
\end{equation*}
which implies that
\begin{align}\label{Istar}
    I^* & = \dfrac{\mu - (\mu + \nu + \gamma) \dfrac{(\alpha + \delta_A+\mu)(\delta_I + \mu)}{\beta_A(\delta_I + \mu) + \beta_I \alpha} + \gamma}{\dfrac{1}{\alpha} (\beta_A (\delta_I + \mu) + \beta_I \alpha  )\dfrac{(\alpha + \delta_A+\mu)(\delta_I + \mu)}{\beta_A(\delta_I + \mu) + \beta_I \alpha}+ \gamma \dfrac{\delta_I + \mu}{\alpha} + \gamma}\nonumber\\
    & = \dfrac{(\mu+\gamma) (\beta_A(\delta_I + \mu) + \beta_I \alpha) - (\mu + \nu + \gamma )(\alpha + \delta_A + \mu) (\delta_I + \mu)}{ \dfrac{\beta_A(\delta_I + \mu) + \beta_I \alpha}{\alpha}\left( (\alpha + \delta_A + \mu+\gamma) (\delta_I + \mu) + \gamma \alpha\right)}  \nonumber\\
    & = \dfrac{(\delta_I + \mu) \left( (\mu + \gamma) \left(\beta_A + \beta_I \dfrac{\alpha}{\delta_I + \mu}\right) - (\mu + \nu + \gamma) (\alpha + \delta_A + \mu)\right)}{ \dfrac{\beta_A(\delta_I + \mu) + \beta_I \alpha}{\alpha}\left( (\alpha + \delta_A + \mu+\gamma) (\delta_I + \mu) + \gamma \alpha\right)} \nonumber \\
    & =\dfrac{(\delta_I + \mu) (\mu+\nu+\gamma)(\alpha+\delta_A+\mu)\left( \dfrac{(\mu + \gamma)}{(\mu + \nu + \gamma) (\alpha + \delta_A + \mu)} \left(\beta_A + \beta_I \dfrac{\alpha}{\delta_I + \mu}\right) - 1\right)}{ \dfrac{\beta_A(\delta_I + \mu) + \beta_I \alpha}{\alpha}\left( (\alpha + \delta_A + \mu+\gamma) (\delta_I + \mu) + \gamma \alpha\right)} \nonumber \\
    & = \dfrac{\alpha (\delta_I + \mu)(\mu+\nu+\gamma)(\alpha+\delta_A+\mu)} { (\beta_A(\delta_I + \mu) + \beta_I \alpha) \left( (\alpha + \delta_A + \mu+\gamma) (\delta_I + \mu) + \gamma \alpha\right)}(\mathcal{R}_0 -1).
\end{align}

The endemic equilibrium in $\mathring \Gamma$ exists if $A^* > 0 $ and $I^*> 0$. We obtain that $I^* >0$, and consequently $A^*>0$, if and 
only if $\mathcal{R}_0 - 1 >0 $. %thus the endemic equilibrium exists in the biologically relevant domain  only if $\mathcal{R}_0>1 $. 
\end{proof}

%\subsubsection{Local stability}

\begin{theorem}\label{loc_stab}
 The endemic equilibrium $x^* = (S^*,A^*,I^*)$ is locally asymptotically stable in $\mathring{\Gamma}$ for system  \eqref{sairs3_s} if $\mathcal{R}_0>1$.
\end{theorem}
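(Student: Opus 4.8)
The plan is to prove local asymptotic stability by linearization, that is, to show that the Jacobian matrix $J := Df(x^*)$ of \eqref{sairs3_s} evaluated at the endemic equilibrium is Hurwitz, via the Routh--Hurwitz criterion for $3\times 3$ matrices. Setting $m_1 := \alpha+\delta_A+\mu$, $m_2 := \delta_I+\mu$ and $\sigma := \beta_A A^* + \beta_I I^* > 0$, a direct computation gives
\begin{equation*}
J = \begin{pmatrix} -\sigma-(\mu+\nu+\gamma) & -\beta_A S^* - \gamma & -\beta_I S^* - \gamma \\ \sigma & \beta_A S^* - m_1 & \beta_I S^* \\ 0 & \alpha & -m_2 \end{pmatrix}.
\end{equation*}
The first step is to exploit the equilibrium identities behind Lemma \ref{ex_ee}: from \eqref{Astar} and \eqref{Sstar} one has $S^*\bigl(\beta_A m_2 + \beta_I\alpha\bigr) = m_1 m_2$, so that $r := m_1 - \beta_A S^* = \alpha\beta_I S^*/m_2 > 0$, which yields the crucial cancellation relation $r\,m_2 = \alpha\,\beta_I S^*$, together with the trivial identity $r + (\beta_A S^* + \gamma) = m_1 + \gamma$. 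These are what make the computation tractable.

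Next I would write the characteristic polynomial of $J$ as $\lambda^3 + a_1\lambda^2 + a_2\lambda + a_3$ and verify the three Routh--Hurwitz conditions $a_1 > 0$, $a_3 > 0$ and $a_1 a_2 - a_3 > 0$. The trace gives at once $a_1 = -\operatorname{tr}J = \sigma + (\mu+\nu+\gamma) + r + m_2 > 0$. For $a_2$, the sum of the three $2\times 2$ principal minors, the key observation is that the minor coming from the lower-right $(A,I)$ block equals $r\,m_2 - \alpha\beta_I S^* = 0$, so $a_2 = \bigl(\sigma+(\mu+\nu+\gamma)\bigr)(m_2+r) + \sigma(\beta_A S^* + \gamma)$ is a sum of positive terms. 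Expanding $\det J$ along the first column and using the same relation produces a cancellation, leaving $\det J = -\sigma\bigl(\alpha(\beta_I S^* + \gamma) + m_2(\beta_A S^* + \gamma)\bigr) < 0$, hence $a_3 = -\det J > 0$.

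The main obstacle is the last inequality $a_1 a_2 > a_3$, which is the only genuinely computational step. Writing $u := \sigma+(\mu+\nu+\gamma)$, $s := m_2+r$ and $P := \beta_A S^* + \gamma$, expansion together with the relation $r\,m_2 = \alpha\beta_I S^*$ should reduce it to
\begin{equation*}
a_1 a_2 - a_3 = u^2 s \;+\; \bigl(u s^2 - \sigma r\,m_2\bigr) \;+\; \bigl(\sigma P(u+r) - \sigma\alpha\gamma\bigr).
\end{equation*}
The first summand is strictly positive; the second is nonnegative since $u s^2 = u(m_2+r)^2 \ge 4\sigma\,m_2 r \ge \sigma r\,m_2$ (using $(m_2+r)^2\ge 4m_2 r$ and $u\ge\sigma$); and the third is nonnegative since $\alpha \le m_1+\gamma = r+P$ and $\gamma \le \min\{u,P\}$ give $\alpha\gamma \le (r+P)\gamma \le rP + Pu = P(u+r)$. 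Hence $a_1 a_2 - a_3 > 0$ and $J$ is Hurwitz. Throughout, $\sigma$, $r$, $P$ are positive precisely in the regime $\mathcal{R}_0 > 1$, which by Lemma \ref{ex_ee} is exactly when $x^* \in \mathring\Gamma$ with $A^*, I^*, S^* > 0$; I do not anticipate any conceptual difficulty beyond careful bookkeeping of these cancellations.
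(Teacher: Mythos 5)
Your proposal is correct and follows essentially the same route as the paper: linearize at $x^*$ and establish that the $3\times 3$ Jacobian is Hurwitz via Routh--Hurwitz. The paper merely writes the Jacobian in terms of $\mathcal{R}_0$ and defers the eigenvalue analysis to \cite[Sec. 2.1]{robinson2013model}, whereas you carry out the verification explicitly; I checked your key identities (namely $r\,m_2=\alpha\beta_I S^*$, the vanishing of the $(A,I)$ principal minor, the sign of $\det J$, and the three-term decomposition of $a_1a_2-a_3$ with its bounds) and they all hold, so your argument is a complete, self-contained version of the paper's proof.
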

\begin{proof}
Note that the expression of \eqref{Sstar} and \eqref{Istar} may be written as function of $\mathcal{R}_0$; using the expression found in \eqref{R0}, we obtain
\begin{align}
    S^* =& \dfrac{h_4}{\mathcal{R}_0},\label{Sstar2}\\
    I^* =& \dfrac{\alpha h_0 h_1 h_2 (\mathcal{R}_0 - 1) }{h_3(\beta_A h_2 + \beta_I \alpha)},
\end{align}
where we have set $h_0 = \mu + \nu + \gamma$, $h_1 = \alpha + \delta_A + \mu$, $h_2 = \delta_I + \mu$, $  h_3 = \gamma \alpha+(h_1 + \gamma ) h_2$, $h_4 = \dfrac{\gamma+\mu}{h_0}{\leq}1$. 
Moreover, we can compute
\begin{equation}\label{sum}
    \beta_A A^* + \beta_I I^* = \dfrac{\beta_A h_2 +\beta_I  \alpha}{\alpha}I^* = \dfrac{h_0 h_1 h_2 (\mathcal{R}_0-1)}{h_3}.
\end{equation}
To determine the stability of the endemic equilibrium $x^*$, we need to compute the Jacobian matrix of \eqref{sairs3_s} evaluated in $x^*$, that is
\begin{equation*}
    J_{|x^*} = \left(
    \begin{matrix}
  -  \dfrac{h_0 h_1 h_2 (\mathcal{R}_0-1)}{h_3} - h_0 & -\dfrac{\beta_A h_4}{\mathcal{R}_0}- \gamma & -\dfrac{\beta_I h_4}{\mathcal{R}_0}- \gamma \\
    \dfrac{h_0 h_1 h_2 (\mathcal{R}_0-1)}{h_3} & \dfrac{\beta_A h_4}{\mathcal{R}_0}-h_1 & \dfrac{\beta_I h_4}{\mathcal{R}_0} \\
    0 & \alpha & - h_2
    \end{matrix}
    \right),
\end{equation*}
where we have used (\ref{Sstar2}-\ref{sum}). With the same arguments as in \cite[Sec. 2.1]{robinson2013model}, we can conclude that $x^*$ is locally asymptotically stable if $\mathcal{R}_0 > 1$.
\end{proof}

\subsection{Global Stability}

\subsubsection{Global stability of the endemic equilibrium in the SAIR model.}\label{glob_sair}
In this section, we focus on the global asymptotic stability of the endemic equilibrium of the SAIR model, i.e., system (\ref{sairs3_s}) with $\gamma=0$, representing a disease which confers permanent immunity. 
Here, we answer directly to the open problem left in \cite{ansumali2020modelling}. Let us note that in our model we have in addition, with respect to the model proposed in \cite{ansumali2020modelling}, the possibility of vaccination.\\
The dynamic of an SAIR model of this type is described by the following system of equations:

\begin{align}\label{sair3}
    \frac{d S(t)}{dt} &= \mu -\bigg(\beta_AA(t) + \beta_II(t)\bigg)S(t) -(\mu + \nu) S(t) , \\ \nonumber
      \frac{d A(t)}{dt} &=\bigg(\beta_A A(t) + \beta_I I(t)\bigg)S(t) -(\alpha + \delta_A +\mu) A(t),  \\
     \frac{d I(t)}{dt} &= \alpha A(t) - (\delta_I + \mu)I(t), \nonumber 
    \end{align}
 The basic reproduction number is $${\mathcal{R}_0}  = \left ( \beta_A + \dfrac{\alpha \beta_I}{\delta_I + \mu}\right) \dfrac{ \mu}{(\alpha+\delta_A + \mu)(\nu + \mu)}.$$\\
% \ste{The existence of an unique endemic equilibrium in $\mathring \Gamma$ when ${\mathcal{R}_0}>1$ can be proved straightforwardly as in Lemma \ref{ex_ee}}.
 The endemic equilibrium $x^* = (S^*,A^*,I^*)$ satisfies the equation
 
 \begin{align}
 \mu&=\bigg(\beta_A A^* + \beta_I I^*\bigg)S^* +(\mu + \nu ) S^*, \label{cond1}\\
(\alpha + \delta_A +\mu) A^*&=  \bigg(\beta_A A^* + \beta_I(r)I^*\bigg)S^*, \label{cond2}\\
\alpha A^* &= (\delta_I + \mu)I^*. \label{cond3}
 \end{align}

\begin{theorem}
The endemic equilibrium $x^* = (S^*,A^*,I^*)$ of \eqref{sair3}  is globally asymptotically stable in $\mathring{\Gamma}$ if $\mathcal{R}_0>1$.
\end{theorem}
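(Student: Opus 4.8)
The plan is to construct an explicit Goh--Volterra (logarithmic) Lyapunov function for \eqref{sair3} and to conclude with LaSalle's invariance principle. First, note that the analogue of Lemma~\ref{ex_ee} for \eqref{sair3} (the same computation with $\gamma=0$) shows that $\mathcal{R}_0>1$ is exactly the condition under which $x^*\in\mathring\Gamma$ exists and is unique, with $S^*,A^*,I^*>0$ satisfying \eqref{cond1}--\eqref{cond3}; and, as in Theorem~\ref{invset}, $\mathring\Gamma$ is positively invariant for \eqref{sair3} (on the face $S+A+I=1$ one has $\dot S+\dot A+\dot I=-\nu S-\delta_A A-\delta_I I<0$), so $\ln S,\ln A,\ln I$ are well defined along interior orbits.

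Next, with $g(y):=y-1-\ln y\ge 0$ (equality iff $y=1$), I would take
\[
V(S,A,I)=\bigl(S-S^*\ln S\bigr)+\bigl(A-A^*\ln A\bigr)+\frac{\beta_I S^*}{\delta_I+\mu}\bigl(I-I^*\ln I\bigr),
\]
which, up to an additive constant, equals $S^*g(S/S^*)+A^*g(A/A^*)+\tfrac{\beta_I S^*}{\delta_I+\mu}I^*g(I/I^*)$ and is therefore positive definite with respect to $x^*$ on $\mathring\Gamma$; moreover $V\to+\infty$ as $x$ approaches $\partial\Gamma$, so each sublevel set $\{V\le c\}\cap\Gamma$ is a compact subset of $\mathring\Gamma$, whence every interior orbit is precompact in $\mathring\Gamma$.

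The core computation is to differentiate $V$ along \eqref{sair3} and use the equilibrium identities \eqref{cond1}--\eqref{cond3} to replace $\mu$, $\alpha+\delta_A+\mu$ and $\delta_I+\mu$ by their equilibrium expressions. The weight $\beta_I S^*/(\delta_I+\mu)$ on the $I$-term is chosen precisely so that, after this substitution, the terms linear in $A$ and linear in $I$ cancel; collecting what remains one obtains
\[
\dot V=-\bigl(\beta_A A^*+\mu+\nu\bigr)\frac{(S-S^*)^2}{S}+\beta_I I^* S^*\left(3-\frac{S^*}{S}-\frac{S\,A^*\,I}{S^*\,A\,I^*}-\frac{A\,I^*}{A^*\,I}\right).
\]
The first term is $\le 0$, and the bracket is $\le 0$ because the three fractions inside it have product $1$, so by AM--GM their sum is $\ge 3$. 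Hence $\dot V\le 0$ on $\mathring\Gamma$.

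Finally, $\dot V=0$ forces $S=S^*$ and, from the equality case of AM--GM, $A/A^*=I/I^*$. On any trajectory contained in this set, $S\equiv S^*$ gives $\dot S\equiv 0$, which by \eqref{cond1} forces $\beta_A A+\beta_I I=\beta_A A^*+\beta_I I^*$; writing $A=\lambda A^*$, $I=\lambda I^*$ this yields $\lambda=1$, so the largest invariant subset of $\{\dot V=0\}$ is $\{x^*\}$. Positive definiteness of $V$ gives Lyapunov stability of $x^*$, while precompactness of orbits together with LaSalle's invariance principle gives global attractivity of $x^*$ in $\mathring\Gamma$; together these give GAS (alternatively, one may invoke the analogue of Theorem~\ref{loc_stab} for \eqref{sair3} to upgrade attractivity to GAS). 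The main obstacle -- indeed the only genuinely delicate point -- is the asymmetry $\beta_A\ne\beta_I$, $\delta_A\ne\delta_I$: one must see that with the weights $1,1,\beta_I S^*/(\delta_I+\mu)$ the $\beta_A$-transmission contribution collapses to a single negative perfect-square term in $S$ while the $\beta_I$-transmission contribution collapses to a three-term AM--GM expression, with no other terms surviving; getting this bookkeeping right is the crux.
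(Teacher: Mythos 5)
Your proposal is correct and is essentially the paper's own argument: your weights $1,1,\beta_I S^*/(\delta_I+\mu)$ coincide (after dividing by an overall constant and using \eqref{cond3}) with the paper's choice $c_1=c_2=\alpha A^*/(\beta_I I^* S^*)$, and the identification of $\{\dot V=0\}$ and the LaSalle step are the same. The only cosmetic difference is that you evaluate $\dot V$ exactly and close the $\beta_I$-block with AM--GM on $\frac{S^*}{S}\cdot\frac{SA^*I}{S^*AI^*}\cdot\frac{AI^*}{A^*I}=1$, whereas the paper bounds the $I$-term via $1-y/z\le-\ln(y/z)$ and writes the result in terms of $g$.
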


\begin{proof}
For ease of notation, we will omit the dependence on $t$. Let us consider $c_1, c_2 > 0$ and the function
$$V=c_1 V_1 + c_2 V_2 + V_3,$$
where  
\begin{equation*}
V_1= S^*\cdot g\left(\frac{S}{S^*}\right), \qquad V_2=A^*\cdot g\left(\frac{A}{A^*}\right), \qquad V_3=I^* \cdot g\left(\frac{I}{I^*}\right),    
\end{equation*}
and $g(x)=x-1-\ln x \geq g(1)=0$, for any $x >0$.
Let us introduce the notation 
\begin{equation*}
u= \frac{S}{S^*}, \qquad y=\frac{A}{A^*}, \qquad  z=\frac{I}{I^*}.    
\end{equation*}
Differentiating $V$ along the solutions of \eqref{sair3}, and using \eqref{cond1}, \eqref{cond2}, \eqref{cond3}, we have
 \begin{equation}\label{V1}
 \begin{split}
c_1 \frac{dV_1}{dt} =& c_1 \left(1-\frac{S^*}{S}\right)\bigg[\mu- (\beta_A A + \beta_I I)S -(\mu +\nu) S\bigg] \\ 
&=c_1 \left(1-\frac{S^*}{S}\right) \bigg[ -(\mu+\nu)(S-S^*) -\beta_A (AS -A^*S^*)-\beta_I (IS-I^*S^*)\bigg]\\
&= c_1 \left(1-\frac{1}{u}\right)\bigg[ -(\mu+\nu)S^*(u-1)- \beta_A A^*S^*(uy-1)- \beta_I I^*S^* (uz -1)\bigg],
\end{split}
 \end{equation}
 
 \begin{equation}\label{V2}
 \begin{split}
c_2 \frac{dV_2}{dt} =& c_2 \left(1-\frac{A^*}{A}\right)\bigg[ (\beta_A A + \beta_I I)S - (\alpha + \delta_A +\mu) A\bigg] \\
&=c_2\left(1-\frac{1}{y}\right)\bigg[ \beta_A A^*S^*uy+\beta_I I^*S^* uz -(\beta_A A^*+\beta_I I^*)S^* y\bigg]\\
&=c_2\left(1-\frac{1}{y}\right)\bigg[\beta_A A^*S^* (uy-y) +\beta_I I^*S^*(uz-y) \bigg],
\end{split}
\end{equation}

\begin{equation}\label{V3}
 \begin{split}
\frac{dV_3}{dt} =& \left(1-\frac{I^*}{I}\right)\bigg[ \alpha A-(\delta_I +\mu)I\bigg]= \left(1-\frac{I^*}{I}\right)\bigg(\alpha A-\frac{\alpha I A^*}{I^*}\bigg)\\
&= \alpha A^*\bigg( 1+\frac{A}{A^*} -\frac{I}{I^*} -\frac{AI^*}{A^*I}\bigg)\\
&\leq \alpha A^* \bigg( -\ln y +y -z +\ln z\bigg)\\
&=  \alpha A^*(g(y)-g(z)),
\end{split}
\end{equation}
where we have used the inequality $1-y/z \leq - \ln (y/z)$. Thus, from \eqref{V1},\eqref{V2}, and \eqref{V3},
\begin{equation}\label{dvdt}
\begin{split}
\frac{dV}{dt}=&-c_1 \left(1-\frac{1}{u}\right) (\mu+\nu)S^*(u-1) + c_1 \beta_A A^* S^* \bigg[   \left(1-\frac{1}{u}\right)(1-uy) +\frac{c_2}{c_1}\left(1-\frac{1}{y}\right)(uy-y)\bigg]\\
&+c_1  \beta_I I^* S^* \bigg[ \left(1-\frac{1}{u}\right)(1-uz) +\frac{c_2}{c_1}\left(1-\frac{1}{y}\right)(uz-y)\bigg]+\alpha A^*(g(y)-g(z)).
\end{split}
\end{equation}
Now, for the second and third term in \eqref{dvdt}, we have

\begin{equation}\label{g1}
\begin{split}
&\left(1-\frac{1}{u}\right)(1-uy) +\frac{c_2}{c_1}\left(1-\frac{1}{y}\right)(uy-y)\\
&=\left(1+\frac{c_2}{c_1}\right) -\frac{1}{u}-uy\left(1-\frac{c_2}{c_1}\right)+y\left(1-\frac{c_2}{c_1}\right)- \frac{c_2}{c_1}u\\
&=-g\left(\frac{1}{u}\right)-g\left(uy\right)\left(1-\frac{c_2}{c_1}\right)+ \left(g(y)\left(1-\frac{c_2}{c_1}\right) -g(u)\right),
\end{split}
\end{equation}
and
\begin{equation}\label{g2}
\begin{split}
&\left(1-\frac{1}{u}\right)(1-uz) +\frac{c_2}{c_1}\left(1-\frac{1}{y}\right)(uz-y)\\
&=\left(1+\frac{c_2}{c_1}\right) -\frac{1}{u}+z -uz\left(1-\frac{c_2}{c_1}\right) - \frac{c_2}{c_1} y-\frac{c_2}{c_1} \frac{uz}{y}\\
=& -g\left(\frac{1}{u}\right) - \frac{c_2}{c_1} g\left(\frac{uz}{y}\right)+\left(g(z)- \frac{c_2}{c_1} g(y)\right)-uz\left(1-\frac{c_2}{c_1}\right).
\end{split}
\end{equation}
Thus, substituting \eqref{g1} and \eqref{g2} in \eqref{dvdt}, we obtain
\begin{equation*}
\begin{split}
\frac{dV}{dt}=& -c_1 \left(1-\frac{1}{u}\right) (\mu+\nu)S^*(u-1) \\
&-c_1 \beta_A A^* S^* \bigg[g\left(\frac{1}{u}\right) +  g(uy) \left(1-\frac{c_2}{c_1}\right)\bigg]
+c_1 \beta_A A^* S^* \bigg[ g(y)\left(1-\frac{c_2}{c_1}\right) -g(u)\bigg]\\
&-c_1\beta_I I^* S^* \bigg[g\left(\frac{1}{u}\right) +\frac{c_2}{c_1} g\left(\frac{uz}{y}\right)\bigg] +c_1\beta_I I^* S^*\bigg[ g(z)-\frac{c_2}{c_1} g(y) -uz\left(1-\frac{c_2}{c_1}\right)\bigg]\\
& +\alpha A^*(g(y)-g(z)).
\end{split}
\end{equation*}
Now, by taking $c_1=c_2=\frac{\alpha A^*}{\beta_I I^* S^*}$, we have

\begin{align*}
   \frac{dV}{dt}=&-c_1 \frac{(u-1)^2}{u}(\mu+\nu)S^* -c_1 \beta_A A^* S^*\bigg( g\left(\frac{1}{u}\right)+g(u) \bigg)\\
   &-c_1\beta_I I^* S^* \bigg(g\left(\frac{1}{u}\right) + g\left(\frac{uz}{y}\right) \bigg). 
\end{align*}
Hence, $\frac{dV}{dt} \leq 0$. Moreover, the set where $\frac{dV}{dt}=0$ is
    $Z=\{ (S,A,I): S=S^*, I=\frac{AI^*}{A^*}\},$ and the only compact invariant subset of $Z$ is the singleton $\{x^*\}$. The claim follows by LaSalle's Invariance Principle \cite{la1976stability}.
\end{proof}

\subsubsection{Global stability of the SAIRS model when $\beta_A = \beta_I := \beta$ and $\delta_A = \delta_I := \delta$}\label{glob_sairs_equal}

In this case, from \eqref{R0}, the expression of the basic reproduction number becomes
\begin{equation*}
 \mathcal{R}_0  = \frac{\beta(\gamma+\mu)}{(\delta + \mu)(\nu +\gamma+\mu)}.
\end{equation*}

\begin{theorem}
Let us assume that $\beta_A = \beta_I =: \beta$ and $\delta_A = \delta_I =: \delta$. The endemic equilibrium $x^* = (S^*,A^*,I^*)$ is globally asymptotically stable in $\mathring{\Gamma}$ for system \eqref{sairs3_s} if $ \mathcal{R}_0>1$.
\end{theorem}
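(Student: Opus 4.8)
The plan is to exploit the extra symmetry created by the two equalities. When $\beta_A=\beta_I=:\beta$ and $\delta_A=\delta_I=:\delta$, the force of infection is $\beta(A+I)S$ and the total infected fraction $W:=A+I$ satisfies a \emph{closed} equation: summing the second and third equations of \eqref{sairs3_s} the terms $\pm\alpha A$ cancel, leaving the planar system
\begin{align*}
\frac{dS}{dt}&=\mu+\gamma-(\mu+\nu+\gamma)S-(\beta S+\gamma)W,\\
\frac{dW}{dt}&=W\bigl(\beta S-(\delta+\mu)\bigr),
\end{align*}
on the triangle $\Delta:=\{(S,W):S\ge 0,\ W\ge 0,\ S+W\le 1\}$, which is forward invariant (the vector field points inward or is tangent on each edge, checked as in Theorem \ref{invset}; in particular $\{W=0\}$ is invariant). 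The projection of the endemic equilibrium is $(S^*,W^*)$ with $S^*=(\delta+\mu)/\beta=S_0/\mathcal{R}_0$ and $W^*=A^*+I^*$, and by Lemma \ref{ex_ee} this point lies in $\mathring\Delta$ exactly when $\mathcal{R}_0>1$, while the only boundary equilibrium is the DFE $(S_0,0)$.

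The first and main step is to prove that $(S^*,W^*)$ is globally asymptotically stable for the planar system in $\mathring\Delta$. I would use the Bendixson--Dulac criterion with Dulac function $\varphi(S,W)=1/W$: a direct computation gives
\[
\frac{\partial}{\partial S}\!\left(\varphi\,\frac{dS}{dt}\right)+\frac{\partial}{\partial W}\!\left(\varphi\,\frac{dW}{dt}\right)=-\frac{\mu+\nu+\gamma}{W}-\beta<0\qquad\text{on }\{W>0\},
\]
so there are no periodic orbits in $\mathring\Delta$. The DFE $(S_0,0)$ is a hyperbolic saddle when $\mathcal{R}_0>1$ (eigenvalues $-(\mu+\nu+\gamma)$ and $(\delta+\mu)(\mathcal{R}_0-1)$, cf.\ Theorem \ref{locDFE}), with its stable manifold lying on the invariant edge $\{W=0\}$, and the interior equilibrium $(S^*,W^*)$ is locally asymptotically stable, since its Jacobian has trace $-(\mu+\nu+\gamma)-\beta W^*<0$ and determinant $(\beta S^*+\gamma)\beta W^*>0$. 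Because $\Delta$ is compact and forward invariant, no interior trajectory can approach the boundary saddle, and since $(S^*,W^*)$ is a sink there is no heteroclinic cycle; the Poincar\'e--Bendixson theorem then forces every solution with $(S(0),W(0))\in\mathring\Delta$ to converge to $(S^*,W^*)$. Hence $S(t)\to S^*$ and $A(t)+I(t)\to W^*$.

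It then remains to recover the individual limits for $A$ and $I$. Writing $A=W-I$, the third equation of \eqref{sairs3_s} becomes the scalar linear nonautonomous equation $dI/dt=\alpha W(t)-(\alpha+\delta+\mu)I$ with $W(t)\to W^*$. Since $\frac{d}{dt}(I-I^*)=-(\alpha+\delta+\mu)(I-I^*)+\alpha\bigl(W(t)-W^*\bigr)$ with $I^*=\alpha W^*/(\alpha+\delta+\mu)$ (this identity is consistent with \eqref{Astar}, which gives $W^*=A^*+I^*=\tfrac{\alpha+\delta+\mu}{\alpha}I^*$), variation of parameters together with $W(t)-W^*\to 0$ yields $I(t)\to I^*$, and therefore $A(t)=W(t)-I(t)\to W^*-I^*=A^*$. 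Combined with $S(t)\to S^*$, this proves that $x^*$ is globally attractive in $\mathring\Gamma$; together with its local asymptotic stability (Theorem \ref{loc_stab}) we conclude that $x^*$ is globally asymptotically stable in $\mathring\Gamma$.

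The step I expect to be delicate is the planar global stability argument: the Bendixson--Dulac estimate is valid on the open half-plane $\{W>0\}$, so one must carefully combine it with the invariance of $\Delta$ to exclude not only limit cycles but also boundary heteroclinic loops through the DFE, so that Poincar\'e--Bendixson really delivers convergence to the interior equilibrium rather than to the boundary. An alternative to this step is to construct a Lyapunov function of the form $c_1 S^*g(S/S^*)+c_2 W^*g(W/W^*)$ with $g(x)=x-1-\ln x$ directly for the reduced planar system; either way, it is the two-dimensional reduction afforded by the double equality that makes the problem tractable, in contrast with the general case treated in Section \ref{glob_SAIRS_noequal}.
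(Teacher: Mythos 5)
Your reduction to the planar $(S,W)$ system with $W=A+I$ is exactly the reduction the paper uses (there $M:=A+I$), but from that point on you take a genuinely different route. The paper establishes global stability of $(S^*,M^*)$ with the Lyapunov function $V=\tfrac12(S-S^*)^2+w\,\bigl(M-M^*-M^*\ln(M/M^*)\bigr)$, choosing $w=(\beta S^*+\gamma)/\beta$ so that the cross term cancels and $dV/dt\le 0$, and then invokes LaSalle via the argument of \cite{ansumali2020modelling}; you instead rule out closed orbits with the Dulac function $1/W$ and conclude by Poincar\'e--Bendixson. Both are correct. The Dulac computation buys you freedom from the somewhat ad hoc choice of the weight $w$, but it is intrinsically two-dimensional, whereas the Lyapunov construction is the one that survives in the other sections of the paper where no planar reduction is available. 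The one place where your argument is thinner than you acknowledge is the exclusion of the boundary from interior omega-limit sets: compactness and forward invariance of $\Delta$ alone do not prevent the saddle $(S_0,0)$ from belonging to an omega-limit set, and one needs either a Butler--McGehee-type argument (the omega-limit set would then contain a nontrivial piece of the stable manifold, i.e.\ of the edge $\{W=0\}$, whose backward orbit leaves $\Delta$, a contradiction with invariance of omega-limit sets) or, more simply, the paper's own uniform persistence result (Theorem \ref{uniform_per}), which confines omega-limit sets to a compact subset of $\{W\ge 2\eps\}$ where your Dulac estimate applies directly. A genuine merit of your write-up is the last step: you make explicit, via variation of parameters for $dI/dt=\alpha W(t)-(\alpha+\delta+\mu)I$, how convergence of $(S,W)$ forces convergence of $A$ and $I$ separately to $A^*$ and $I^*$; the paper delegates this to the cited reference.
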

\begin{proof}

Let us define $M(t) := A(t) + I(t)$, for all $t\geq 0$. Then, we can rewrite \eqref{sairs3_s} as
\begin{align*}
    \frac{d S(t)}{dt} &= \mu - \beta M(t) S(t) -(\mu + \nu +\gamma) S(t) + \gamma(1-M(t) ), \\ \nonumber
     \frac{dM(t)}{dt} &= \beta M(t) S(t) -  (\delta + \mu)M(t).
    \end{align*}

%If $\mathcal{R}_0 > 1$, from Lemma $\ref{ex_ee}$ we know that the endemic equilibrium $x^* = (S^*,A^*,I^*)$ exists. Then,
At the equilibrium it holds that
\begin{align}
    \mu + \gamma &= \beta M^*S^* + (\mu + \nu + \gamma) S^* + \gamma M^*, \label{Icond}\\
   \delta + \mu& =  \beta S^*, \label{IIcond}
\end{align}
where $M^* = A^* + I^*$.
In the following, for ease of notation, we will omit the dependence on $t$. Consider the following positively definite function
\begin{equation*}\label{Lyap1}
    V = \frac{1}{2}(S - S^*)^2 + w\left(M- M^* - M^*\ln\left(\dfrac{M}{M^*}\right)\right) ,
\end{equation*}
where $w$ is a non negative constant.

Differentiating along \eqref{sairs3_s} and using the equilibrium  conditions (\ref{Icond}-\ref{IIcond}) we obtain
\begin{align*}
    \frac{d V}{dt}= &(S-S^*) \left( \beta (M^*S^*-MS) -(\mu + \nu + \gamma )(S-S^*) \right.+ \\
    & +\left. \gamma (M^*-M) \right)  
    + w \left( 1 - \frac{M^*}{M}\right) \beta M (S-S^*) \\
    = & \;  \beta (S- S^*) (M^*S^* - MS^* + M S^* - M S ) - (\mu + \nu + \gamma) (S- S^*)^2 + \\
    & + \gamma(M^* - M ) (S - S^*) + w \beta (M - M^*) (S - S^*) \\
    = & \; \beta S^* ( S - S^*) (M^*- M ) - ( \beta M+ \mu + \nu + \gamma) (S- S^*)^2 + \\
    & + \gamma (M^*-M) (S - S^*) + w \beta(M - M^*) (S- S^*) \\
   \leq & \;  \left( \beta S^*  + \gamma - w \beta \right) (S - S^*) (M - M^*).
\end{align*}
Choosing $w := \frac{\beta S^* + \gamma}{\beta} > 0$, it follows that $\frac{dV}{dt} \leq 0$. {The claim follows from the same argument used in \cite[Thm 7]{ansumali2020modelling}.} \end{proof}

 \subsubsection{Global stability of the SAIRS model with $\beta_A \neq \beta_I$ and $\delta_A \neq \delta_I$: a geometric approach}\label{glob_SAIRS_noequal}
 
 We use a geometric approach for the global stability of equilibria of nonlinear autonomous differential equations proposed in \cite{lu2017geometric}, that is a generalization of the approach developed by Li and Muldowney \cite{li1996geometric,li2000dynamics}. First, we briefly recall the salient concepts.
 
Consider the following autonomous system 
\begin{equation}\label{syst_geo}
 x'= f(x), \qquad x \in D \subset \mathbb R^n,     
\end{equation}
 where $f(x): D \to \mathbb R^n $ is a continuous differentiable function in $D$. Let $x(t,x(0))$ be the solution of system \eqref{syst_geo} with the initial value $x(0,x(0))=x(0)$.
 We assume that system \eqref{syst_geo} has an $n-m$ dimensional invariant manifold $\Omega$ defined by
 \begin{equation}\label{omega_inv}
 \Omega=\{ x \in \mathbb R^n| g(x)=0\},
 \end{equation}
 where $g(x)$ is an $\mathbb R^m$-valued twice continuously differentiable function with dim$(\frac{\partial g}{\partial x})=m$ when $g(x)=0$.
 In \cite{li2000dynamics}, Li and Muldowney proved that if $\Omega$ is invariant with respect
to system \eqref{syst_geo}, then there exists a continuous $m \times m$ dimensional
matrix-valued function $N(x)$, such that 
$$g_f(x)=\frac{\partial g}{\partial x}\cdot f(x)=N(x)\cdot g(x),$$
 where $g_f(x)$ is the directional derivative of $g(x)$ in the direction of the vector field $f$.
 Moreover, let us define the real valued function $\sigma(x)$ on $\Omega$, by 
 $$\sigma(x)=tr(N(x)),$$
 and make the following assumptions:
 
 \begin{itemize}
     \item[(H1)] $\Omega$ is simply connected; 
     \item[(H2)] There is a compact absorbing set $K \subset D \subset \Omega$;
     \item[(H3)] $x^*$ is the unique equilibrium of system \eqref{syst_geo} in $D \subset \Omega$ which satisfies $f(x^*)=0$.
 \end{itemize}
 
 Now, consider the following linear differential equation, associated to system \eqref{syst_geo}
 \begin{equation}\label{z_syst}
     z'(t)=\left[ P_f P^{-1} + P J^{[m+2]}P^{-1}-\sigma I\right]z(t)=: B(x(t,x(0))z(t),
 \end{equation}
 where $x \mapsto P(x)$ is a $C^1$ nonsingular ${n \choose m+2} \times {n \choose m+2}$ matrix-valued function in $\Omega$ such that $||P^{-1}(x)||$ is uniformly bounded for $x \in K$ and $P_f$ is the directional derivative of $P$ in the direction of the vector field $f$, and $J^{[m+2]}$ is the $m+2$ additive compound matrix of the Jacobian matrix of \eqref{syst_geo}.
 Assume that the following additional condition holds:\\
 
 \noindent
 (H4) for the coefficient matrix $B(x(t,x(0))$, there exists a matrix $C(t)$, a large enough $T_1>0$ and some positive numbers $\alpha_1,\alpha_2, \ldots, \alpha_n$ such that for all $t \geq T_1$ and all $x(0) \in K$ it holds
 \begin{equation*}
b_{ii}(t)+\sum_{i \neq j} \frac{\alpha_j}{\alpha_i}|b_{ij}(t)| \leq c_{ii}(t) + \sum_{i \neq j}  \frac{\alpha_j}{\alpha_i} |c_{ij}(t)|,
 \end{equation*}
and
\begin{equation*}
\lim_{t \to \infty} \frac{1}{t} \int_0^t c_{ii}(s) + \sum_{i \neq j}  \frac{\alpha_j}{\alpha_i} |c_{ij}(s)|\; ds= h_i <0, 
\end{equation*}
where $b_{ij}(t)$ and $c_{ij}(t)$ represent entries of matrices $B(x(t,x(0))$ and $C(t)$, respectively.
Basically, condition (H4) is a Bendixson criterion for ruling out non-constant periodic solutions of system \eqref{syst_geo} with invariant manifold $\Omega$. 
 From this, by a similar argument as in Ballyk et al. \cite{ballyk2005global}, based on \cite[Thm 6.1]{li2000dynamics}, the following theorem can be deduced (see \cite[Thm 2.6]{lu2017geometric}).
 
 \begin{theorem}\label{thm26}
 Under the assumptions (H1)-(H4), the unique endemic equilibrium $x^*$ of \eqref{syst_geo} is globally asymptotically stable in $D \subset \Omega$.
 \end{theorem}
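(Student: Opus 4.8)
The plan is to obtain Theorem~\ref{thm26} as a manifold-aware version of the Li--Muldowney Bendixson criterion, reducing it to the autonomous convergence theorem \cite[Thm 6.1]{li2000dynamics} in the spirit of Ballyk et al.~\cite{ballyk2005global}. The geometric idea behind the coefficient matrix in \eqref{z_syst} is this: since $\Omega$ in \eqref{omega_inv} has dimension $n-m$, the flow restricted to $\Omega$ is effectively $(n-m)$-dimensional, and to exclude non-constant periodic orbits there — the only obstruction to convergence left once uniqueness and an absorbing set are in hand — one must show that the area of every Lipschitz $2$-surface lying in $\Omega$ decays to zero along the flow. In the ambient $\mathbb R^n$, such a $2$-surface, spanned together with the $m$ directions transverse to $\Omega$, fills an $(m+2)$-dimensional parallelepiped whose evolution is generated by the $(m+2)$-additive compound $J^{[m+2]}$ of the Jacobian; invariance of $\Omega$ forces the transverse $m$-volume to evolve by the scalar rate $\mathrm{tr}\,N(x)=\sigma(x)$ (this is where $g_f=Ng$ enters), so subtracting $\sigma I$ isolates the dynamics of the $2$-area \emph{inside} $\Omega$. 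Conjugating by an arbitrary admissible $P(x)$ produces exactly $B(x)=P_fP^{-1}+PJ^{[m+2]}P^{-1}-\sigma I$.

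First I would fix a solution $x(t,x(0))$ with $x(0)\in K$ and study \eqref{z_syst} in the weighted norm $|z|_\alpha=\max_i |z_i|/\alpha_i$ attached to the constants $\alpha_i$ of (H4). For $z'=B(t)z$ one has the standard Dini-derivative estimate $D^+|z(t)|_\alpha\le \mu_\alpha(B(t))\,|z(t)|_\alpha$, where the logarithmic norm $\mu_\alpha(B(t))=\max_i\big(b_{ii}(t)+\sum_{j\neq i}\tfrac{\alpha_j}{\alpha_i}|b_{ij}(t)|\big)$ is precisely the left-hand side of the first inequality in (H4). That inequality bounds it, for $t\ge T_1$, by the matching expression formed from $C(t)$, and the second part of (H4) states that these time averages converge to the negative constants $h_i$. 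A Gr\"onwall estimate integrated from $T_1$, together with the negativity of the Ces\`aro limits, then yields $|z(t)|_\alpha\to 0$ as $t\to\infty$, uniformly for $x(0)\in K$.

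Next comes the classical Muldowney surface argument. If \eqref{syst_geo} had a non-constant periodic solution $p(\cdot)$ of period $\omega$ with orbit in $\Omega\cap K$, choose a Lipschitz $2$-surface in $\Omega$ with boundary the orbit and express the evolution of its $\alpha$-weighted area through solutions $z(t)$ of \eqref{z_syst}; the decay just established forces this area to $0$, whereas periodicity forces it to stay bounded away from $0$ — a contradiction. The same estimate rules out homoclinic orbits and heteroclinic cycles in $\Omega$ (their closures are simple closed rectifiable curves), and since it is robust under small $C^1$ perturbations of $f$, the standard index argument upgrades it to local asymptotic stability of $x^*$. Assembling (H1) (simple connectedness of $\Omega$), (H2) (the compact absorbing set $K$), (H3) (uniqueness of $x^*$), the exclusion of periodic orbits and cycles, and local stability, the autonomous convergence theorem \cite[Thm 6.1]{li2000dynamics}, applied on the invariant manifold $\Omega$ as in \cite{ballyk2005global,lu2017geometric}, gives that every trajectory in $D\subset\Omega$ converges to $x^*$, which is global asymptotic stability.

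The step I expect to be the real work is the geometric bookkeeping that legitimizes replacing ``contraction of $2$-areas within the curved manifold $\Omega$'' by ``contraction of $(m+2)$-volumes in $\mathbb R^n$ renormalized by the transverse $m$-volume'', and hence the specific form $P_fP^{-1}+PJ^{[m+2]}P^{-1}-\sigma I$ of the generator, valid for every admissible $P$; this is the heart of \cite{lu2017geometric} and rests on compound-matrix and exterior-algebra identities on manifolds. A second, more pedestrian difficulty is that (H4) controls $\mu_\alpha(B(t))$ only for large $t$ and only through time averages, so the Gr\"onwall step must be handled with care — splitting the integral at $T_1$ and invoking the Ces\`aro limits $h_i<0$ — to extract genuine decay of $|z(t)|_\alpha$ rather than mere boundedness. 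Since both points are settled in \cite{lu2017geometric}, in the present paper the theorem is simply quoted, and the substantive task becomes the verification of (H1)--(H4) for system \eqref{sairs3_s}, carried out in the results that follow.
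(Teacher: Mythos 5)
Your proposal is correct and follows essentially the same route as the paper, which in fact offers no proof of this statement at all: it simply deduces it from \cite[Thm 2.6]{lu2017geometric}, ``by a similar argument as in Ballyk et al.~\cite{ballyk2005global}, based on \cite[Thm 6.1]{li2000dynamics}''. Your sketch --- the logarithmic-norm estimate for \eqref{z_syst} in the weighted norm attached to the $\alpha_i$, the Muldowney surface argument excluding periodic orbits and cycles under (H1), and the autonomous convergence theorem combined with (H2)--(H3) --- is a faithful outline of exactly that cited argument, and you correctly identify that the substantive work in this paper is the verification of (H1)--(H4) for the SAIRS system.
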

 
 For our system \eqref{sairs_s}, we have that the invariant manifold \eqref{omega_inv} is the set $\bar \Gamma$ in \eqref{gamma_inv}, %g(x)=S+A+I+R-1, %$\frac{\partial g}{\partial x}=(1,1,1,1)$
 so $n=4$, $m=1$, and $N(x)=-\mu$. It is easy to see that (H1) holds, and that for $\mathcal R_0>1$, by Theorem \ref{uniform_per} and Lemma \ref{ex_ee}, (H2)-(H3) follows.
 
 \begin{theorem}\label{thmcond}
 Assume that $\mathcal{R}_0 > 1$ and $\beta_A < \delta_I$. Then, the endemic equilibrium $x^*$ is globally asymptotically stable in $\mathring{\bar \Gamma}$ for system \eqref{sairs_s}.
 \end{theorem}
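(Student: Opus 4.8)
The plan is to verify the four hypotheses of Theorem~\ref{thm26} for system \eqref{sairs_s}, whose invariant manifold is $\bar\Gamma$, so that $n=4$, $m=1$, $N(x)=-\mu$ and hence $\sigma(x)=\operatorname{tr}N(x)=-\mu$. As already noted in the text, (H1) holds since $\bar\Gamma$ is simply connected, and when $\mathcal{R}_0>1$ the uniform persistence of Theorem~\ref{uniform_per} supplies a compact absorbing set in $\mathring{\bar\Gamma}$ while Lemma~\ref{ex_ee} gives uniqueness of $x^*$, so (H2)--(H3) hold. Thus the whole content of the proof is the Bendixson-type condition (H4).

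Since $m+2=3$ and $\binom{4}{3}=4$, I would first write the $4\times4$ Jacobian $J=J(x)$ of the field in \eqref{sairs_s} and form its third additive compound matrix $J^{[3]}$, again a $4\times4$ matrix (its diagonal entries are sums of three diagonal entries of $J$, its off-diagonal entries are $\pm$ single entries of $J$). Next I would choose a nonsingular, state-dependent diagonal matrix $P(x)=\diag(p_1,p_2,p_3,p_4)$, with the $p_i$ built from the state variables — the natural building block being the ratio $A/I$, which arises from the transition $A\to I$ at rate $\alpha$ — and compute
$$B(x)=P_fP^{-1}+PJ^{[3]}P^{-1}+\mu I,$$
where $P_f$ is the derivative of $P$ along \eqref{sairs_s} and $-\sigma I=\mu I$. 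The diagonal matrix $P_fP^{-1}$, after substituting $\dot S/S,\ \dot A/A,\ \dot I/I,\ \dot R/R$ from \eqref{sairs_s}, produces exactly the terms that should cancel the "bad" off-diagonal contributions of $PJ^{[3]}P^{-1}$; the purpose of the guess for $P$ is to make these cancellations occur in every row.

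To establish (H4) I would then pick positive weights $\alpha_1,\dots,\alpha_4$ (most likely all equal, or with a single two-value split reflecting the $A$--$I$ asymmetry) and a comparison matrix $C(t)$ obtained by bounding the remaining state-dependent coefficients of $B$ from above, using for $t$ large the uniform persistence lower bounds $S,A,I\ge\varepsilon$ and the constraint $S+A+I\le1$ on $\bar\Gamma$. The Lozinskii-type row sums $c_{ii}(t)+\sum_{j\neq i}\tfrac{\alpha_j}{\alpha_i}|c_{ij}(t)|$ then become linear combinations of the rates $\mu,\nu,\gamma,\alpha,\delta_A,\delta_I$ and of $\beta_A S$, $\beta_A A+\beta_I I$ — quantities whose long-run behaviour is controlled because solutions approach a neighbourhood of $x^*$ — and one must show that each of the four has a strictly negative Cesàro limit $h_i<0$. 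This is precisely where $\beta_A<\delta_I$ enters: it is the inequality that forces the row coupling the $A$- and $I$-directions (the one carrying $\beta_A S$ from the asymptomatic compartment against $-(\delta_I+\mu)$ from the symptomatic one) to be dominated by a negative constant; without it that row sum cannot be bounded away from $0$. With all $h_i<0$, Theorem~\ref{thm26} yields global asymptotic stability of $x^*$ in $\mathring{\bar\Gamma}$.

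I expect the main obstacle to be exactly these two coupled steps: guessing a single $P$ that simultaneously tames all four rows of $B$, and then the row-by-row sign bookkeeping needed to produce a $C(t)$ with negative row-sum averages. A secondary technical point is the passage from the state-dependent coefficients of $B$ to the constant comparison matrix required by (H4): one must invoke uniform persistence (Theorem~\ref{uniform_per}) for the lower bounds and positive invariance of $\bar\Gamma$ for the upper bounds, while carefully tracking which combinations of parameters the hypothesis $\beta_A<\delta_I$ actually controls.
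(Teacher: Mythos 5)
Your reduction of the theorem to hypothesis (H4) of Theorem~\ref{thm26} is exactly the route the paper takes, and your identification of (H1)--(H3) as already settled by Theorem~\ref{uniform_per} and Lemma~\ref{ex_ee} is correct. The problem is that everything after that point is a plan rather than a proof: the entire mathematical content of the theorem is the construction of a working matrix $P$ and the row-by-row verification of (H4), and you leave both as obstacles you ``expect'' to overcome. A referee cannot accept ``I would choose a diagonal $P$ built from the state variables'' in place of an actual $P$, because for this method the existence of a suitable $P$ is precisely what is in doubt.

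Concretely, the paper takes $P(x)=\diag(R,\,cI,\,A,\,S)$ with a \emph{constant} $c$ satisfying $\tfrac{\delta_I+\mu}{\beta_I\eps+\nu+\delta_I+\mu}<c<1$, where $\eps$ is the uniform persistence constant. This differs from your guess in two ways that matter. First, the entries are the state variables themselves (in a permuted order), not ratios such as $A/I$; the effect of conjugating $J^{[3]}$ by this $P$ and adding $P_fP^{-1}$ is that every off-diagonal entry becomes a ratio like $\gamma R/S$, $\beta_I IS/A$, $\alpha A/(cI)$, $c\,\delta_I I/R$, which the equilibrium-free identities obtained directly from \eqref{sairs_s} convert into a constant plus a logarithmic derivative $S'/S$, $A'/A$, $I'/I$ or $R'/R$. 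Those logarithmic derivatives have zero Ces\`aro average on the absorbing set, which is how the state-dependent row sums acquire constant limits $\bar h_i$. Second, the constant $c$ is doing the job of your unequal weights $\alpha_i$ (the paper then takes all weights equal to $1$), and its admissible interval depends on $\eps$; it is needed to make the second and third row-sum limits, $-\eps\beta_A-\nu-\gamma-\mu+c(\gamma+\mu)$ and $-\eps\beta_I-\nu-\delta_I-\mu+(\delta_I+\mu)/c$, strictly negative. The hypothesis $\beta_A<\delta_I$ is then used only for the remaining two rows, whose limits are $\beta_A-\delta_A-\alpha-\delta_I$ and $\beta_A-\delta_I$; your heuristic about ``the row coupling the $A$- and $I$-directions'' points in roughly the right place but is not a substitute for these computations. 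Until you exhibit a $P$, compute $B$, and check all four row sums, the proof has a genuine gap.
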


\begin{proof}
Let us recall that from \eqref{pers}, there exists $T>0$ such that for $t > T$,

\begin{equation}\label{pers2}
    \eps \leq S(t),A(t),I(t),R(t) \leq 1-\eps.
\end{equation}
The Jacobian matrix of \eqref{sairs_s} may be written as 
\begin{equation*}
    J = -\mu I_{4 \times 4} + \Phi,
\end{equation*}
where $I_{4 \times 4}$ is the $4\times 4$
identity matrix and 
\begin{equation*}
    \Phi = \left(
\begin{matrix}
    -(\beta_A A +\beta_I I +\nu) & -\beta_A S & -\beta_I S & \gamma\\
    \beta_A A + \beta_I I & \beta_A S -(\delta_A + \alpha) & \beta_I S & 0 \\
    0 & \alpha & -\delta_I & 0 \\
    \nu & \delta_A & \delta_I & -\gamma
\end{matrix}
    \right).
\end{equation*}
From the definition of the third additive compound matrix (see, e.g., \cite[Appendix]{li1999global}), we have $$J^{[3]}=\Phi^{[3]}-3 \mu I_{4 \times 4},$$
with
$$\Phi^{[3]}= \left(\phi_1^{[3]},\phi_2^{[3]},\phi_3^{[3]},\phi_4^{[3]}\right)^T,$$
where

$$\phi_1^{[3]}=\left(-(\beta_A A +\beta_I I +\nu) + \beta_A S-(\delta_A +\alpha) -\delta_I, \; 0,\; 0,\; \gamma\right)^T,$$
$$\phi_2^{[3]}= \left(\delta_I,\; -(\beta_A A +\beta_I I +\nu) + \beta_A S-(\delta_A +\alpha) -\gamma,\; \beta_I S,\; \beta_I S\right)^T,$$
$$\phi_3^{[3]}= \left(-\delta_A, \; \alpha, \; -(\beta_A A +\beta_I I +\nu) -\delta_I -\gamma, \; -\beta_A S\right)^T,$$
$$\phi_4^{[3]}= \left(\nu, \; 0, \; \beta_A A +\beta_I I, \; \beta_A S -(\delta_A +\alpha+ \delta_I+ \gamma)\right)^T.$$
\begin{comment}
\begin{equation*}
\Phi^{[3]} = \left(
\begin{matrix}
    -(\beta_A A +\beta_I I +\nu) + \beta_A S-(\delta_A +\alpha) -\delta_I & 0 & 0 & \gamma\\[5pt]
    \delta_I & -(\beta_A A +\beta_I I +\nu) + \beta_A S-(\delta_A +\alpha) -\gamma & \beta_I S & \beta_I S \\[5pt]
    -\delta_A & \alpha & -(\beta_A A +\beta_I I +\nu) -\delta_i -\gamma & -\beta_A S \\[5pt]
    \nu & 0 & \beta_A A +\beta_I I & \beta_A S -(\delta_A +\alpha- \delta_I- \gamma)
\end{matrix}
    \right).
\end{equation*}
\end{comment}
Let $P(x)$ be such that 
$$P(x)=\diag(R,cI,A,S),$$
where $c$ is a constant such that $\frac{\delta_I+\mu}{\beta_I \eps+\nu +\delta_I+\mu}<c<1$, then from \eqref{z_syst} by direct computation we have 
\begin{equation*}
B(t)=P_f P^{-1} + P J^{[3]}P^{-1}+\mu I_{4 \times 4} = 
\diag\left(\frac{R'}{R},\frac{I'}{I},\frac{A'}{A},\frac{S'}{S}\right)+ P\Phi^{[3]}P^{-1}-2\mu I_{4 \times 4}, 
\end{equation*}
where
$$P\Phi^{[3]}P^{-1}=\left(\zeta_1^{[3]},\zeta_2^{[3]},\zeta_3^{[3]},\zeta_4^{[3]}\right)^T,$$
and
$$\zeta_1^{[3]}=\left(-(\beta_A A +\beta_I I +\nu) + \beta_A S-(\delta_A +\alpha) -\delta_I, \; 0,\; 0,\; \gamma \frac{R}{S}\right)^T,$$
$$\zeta_2^{[3]}= \left(c \frac{\delta_I I}{R},\; -(\beta_A A +\beta_I I +\nu) + \beta_A S-(\delta_A +\alpha) -\gamma,\; c \frac{\beta_I I S}{A},\; c \beta_I I\right)^T,$$
$$\zeta_3^{[3]}= \left(-\frac{\delta_A A}{R}, \; \frac{\alpha A}{c I}, \; -(\beta_A A +\beta_I I +\nu) -\delta_I -\gamma, \; -\beta_A A\right)^T,$$
$$\zeta_4^{[3]}= \left(\frac{\nu S}{R}, \; 0, \; (\beta_A A +\beta_I I)\frac{S}{A}, \; \beta_A S -(\delta_A +\alpha+ \delta_I+ \gamma)\right)^T.$$
From the system of equations \eqref{sairs_s}, we obtain
\begin{equation}\label{SA}
\frac{\gamma R}{S}= \mu\left(1- \frac{1}{S}\right)+(\beta_A A + \beta_I I) + \nu +\frac{S'}{S} , \qquad \frac{\beta_I I S}{A}= \alpha+\delta_A + \mu - \beta_A S + \frac{A'}{A},   
\end{equation}
\begin{equation}\label{IR}
    \frac{\alpha A}{I}= \delta_I +\mu + \frac{I'}{I}, \qquad \frac{\delta_I I}{R}= \gamma + \mu -\frac{\delta_I I}{R} -\frac{\nu S}{R}+\frac{R'}{R}.
\end{equation}
Consequently, by using \eqref{pers2} and \eqref{SA}-\eqref{IR}, we have  
\begin{equation*}
\begin{split}
    h_1(t)&= b_{11}(t)+ \sum_{j \neq 1}|b_{1j}(t)|\\
    &=-(\beta_A A +\beta_I I+ \nu) + \beta_A S- (\delta_A +\alpha) -\delta_I -2 \mu + \frac{R'}{R} +\frac{\gamma R}{S}\\
    &= \beta_A S-\delta_A -\alpha -\delta_I -\frac{\mu}{S}+\frac{R'}{R}+\frac{S'}{S} \\
    &\leq \beta_A -\delta_A -\alpha -\delta_I+\frac{R'}{R}+\frac{S'}{S}=: \bar h_1(t), \\ \\
    h_2(t)&= b_{22}(t)+ \sum_{j\neq2}|b_{2j}(t)|\\
    &=-(\beta_A A +\beta_I I+ \nu)+ \beta_A S- (\delta_A +\alpha) -\gamma -2 \mu + \frac{I'}{I} + c\frac{\delta_I I}{R} + c\frac{\beta_I S I}{A} + c \beta_I I \\
    & \leq -\eps \beta_A  - \nu -\gamma -\mu +c(\gamma + \mu) +c \frac{I'}{I}+ c \frac{R'}{R} + \frac{A'}{A}=: \bar h_2(t),\\ \\
    h_3(t)&= b_{33}(t)+ \sum_{j\neq 3}|b_{3j}(t)|\\
    &= -(\beta_A A +\beta_I I+ \nu) -\delta_I-\gamma -2 \mu +\frac{A'}{A} +\frac{\delta_A A}{R}+ \frac{\alpha A}{c I}+ \beta_A A\\
    &\leq - \eps \beta_I - \nu - \delta_I -\mu + \frac{\delta_I + \mu}{c} + \frac{A'}{A}+ \frac{R'}{R}+ \frac{I'}{cI}
    =: \bar h_3(t),\\ \\
    h_4(t)&= b_{44}(t)+ \sum_{j\neq 4}|b_{4j}(t)|\\
    &=\beta_A S -(\delta_A +\alpha) -\delta_I -\gamma -2 \mu + \frac{S'}{S} + \frac{\nu S}{R} + \beta_A S + \frac{\beta_I S I}{A}\\
    & \leq -\delta_I + \beta_A + \frac{S'}{S}+ \frac{R'}{R} + \frac{A'}{A} =: \bar h_4(t).
    \end{split}
\end{equation*}
Then, we can take the matrix $C$ in condition (H4) as
$$C(t)=\diag\left(\bar h_1(t), \bar h_2(t), \bar h_3(t),\bar h_4(t)\right),$$
based on \eqref{pers2} and by the assumption $\beta_A < \delta_I$, we can assert that
$$\lim_{t \to \infty} \frac{1}{t} \int_0^t \bar h_i(s) ds = \bar h_i <0, \qquad i=1,\ldots,4,$$
where 
\begin{equation*}
\bar h_1=\beta_A -\delta_A -\alpha -\delta_I, \quad \bar h_2= -\eps \beta_A  - \nu -\gamma -\mu +c(\gamma + \mu) , \quad \bar h_3= - \eps \beta_I - \nu - \delta_I -\mu+ \frac{\delta_I + \mu}{c}, \quad \bar h_4= -\delta_I + \beta_A.    \end{equation*}
Indeed, if $\beta_A < \delta_I$ holds, both $\bar h_2$ and $\bar h_3$ are less than zero; moreover, $\bar h_1$ and $\bar h_2$ are less than zero by the choice of $c$.
The claim then follows from Theorem \ref{thm26}.
\end{proof} 

%\sara{Notice that in \cite[Theorem 3.4]{lu2017geometric} the global asymptotic stability of the endemic equilibrium for the SEIRS model has been proved without any condition on the parameters. Using the same technique, 

{We proved the global asymptotic stability of the endemic equilibrium for the SAIRS model with a condition on the parameters, that is $\beta_A < \delta_I$.}
{However, supported also by numerical simulations in Sec.~\ref{num_analysis_sec}, we are led to think that this assumption could be relaxed. Thus, we state the following conjecture.}

\begin{conjecture}\label{conj}
%Assume $\mathcal{R}_0>1$. Then, the assumption $\beta_A < \delta_I$ in Theorem \ref{thmcond} is sufficient but not necessary for the endemic equilibrium to be globally asymptotically stable for system \eqref{sairs_s}.
The endemic equilibrium $x^*$ is globally asymptotically stable in $\mathring{\bar \Gamma}$ for system \eqref{sairs_s} if $R_0>1$.
\end{conjecture}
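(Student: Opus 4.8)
\medskip
\noindent\emph{A strategy towards Conjecture~\ref{conj}.}
A complete proof is presently out of reach; here we outline the route we consider most promising and isolate the essential difficulty. The natural plan is to retain the geometric framework of Theorem~\ref{thm26} but to replace the diagonal choice $P(x)=\diag(R,cI,A,S)$ by a more flexible matrix‑valued function. In the estimates of Theorem~\ref{thmcond} the hypothesis $\beta_A<\delta_I$ is forced only by the rows $\bar h_1=\beta_A-\delta_A-\alpha-\delta_I$ and $\bar h_4=\beta_A-\delta_I$, each of which inherits a $\beta_A S$ term from the diagonal of $\Phi^{[3]}$ (the self‑term of the $A$‑equation in $\phi_1^{[3]}$ and $\phi_4^{[3]}$). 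Since along trajectories one only knows $\beta_A S\le\beta_A(1-\eps)$, and since $\beta_A S^{*}$ can be as large as $\alpha+\delta_A+\mu$ (hence may well exceed $\delta_I$ while $\mathcal R_0>1$), no shrinking of the absorbing set $K$ can eliminate this term: it must be absorbed through a better weighting. Concretely, I would (i) introduce a parametrized family $P=\diag(p_1,p_2,p_3,p_4)$ with the $p_i$ suitable monomials in $S,A,I,R$ (not just first powers), recompute $B(t)=P_fP^{-1}+PJ^{[3]}P^{-1}+\mu I_{4\times4}$, and use the identities \eqref{SA}--\eqref{IR} together with the fact that $\frac1t\int_0^t (X'/X)\,ds\to0$ for each coordinate bounded away from $0$, to try to route $\beta_A S$ into an off‑diagonal block rather than bound it; and (ii) if a diagonal $P$ does not suffice, allow a nonzero off‑diagonal entry coupling the first and fourth compound coordinates — precisely the ones carrying $R$ and $S$, where the loss‑of‑immunity fluxes $\gamma R$ and $\nu S$ act.

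A parallel line of attack is a Lyapunov function on the full four‑dimensional system \eqref{sairs_s} rather than on the reduced one. The pure Volterra function $\sum_i c_i\,x_i^{*}g(x_i/x_i^{*})$ works for the SAIR model of Section~\ref{glob_sair}; what breaks it when $\gamma>0$ and the rates are unequal is the two‑cycle $S\leftrightarrow R$ with \emph{unbalanced} equilibrium fluxes $\nu S^{*}\neq\gamma R^{*}$, which over‑determines the cycle weights in the graph‑theoretic construction of \cite{shuai2013global}. The remedy I would try is the composite structure that already succeeds in Section~\ref{glob_sairs_equal}, lifted to the general model: keep Volterra terms $c_A A^{*}g(A/A^{*})+c_I I^{*}g(I/I^{*})$ for the two infectious compartments (which must be kept separate now, since $\beta_A\neq\beta_I$, $\delta_A\neq\delta_I$ precludes the $M=A+I$ reduction) and add a term $W(S,R)$ that is quadratic in $(S-S^{*},R-R^{*})$, or a combination of a quadratic and a $g$‑term, tuned so that the cross terms generated by the infection flux and by the $\gamma R$, $\nu S$ fluxes cancel. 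One then checks $\dot V\le0$ with equality only at $x^{*}$ and invokes LaSalle's principle; the analogous programme is known to succeed for SIRS and SEIRS, which supports the conjecture.

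The main obstacle is the interaction between the asymmetry $\beta_A\neq\beta_I$, $\delta_A\neq\delta_I$ and the loss of immunity $\gamma>0$: the former makes the force of infection a genuine linear combination of two compartments, so the associated digraph is not a simple cycle and the infection edge contributes two distinct ratio terms; the latter reinstates $R$ as an essential, non‑eliminable variable. Either route above ultimately requires controlling a $\beta_A S$ contribution — equivalently $\beta_A S^{*}$, which lies in $(0,\alpha+\delta_A+\mu)$ — without the safety margin $\beta_A<\delta_I$. Producing the weighting matrix $P$, or the correction term $W$, that achieves this uniformly over the parameter regime $\beta_A\ge\delta_I$ is the crux, and is exactly what remains open; the numerical evidence of Section~\ref{num_analysis_sec} strongly suggests that such a certificate exists.
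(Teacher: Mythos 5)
The statement you are addressing is a \emph{conjecture}: the paper itself offers no proof of it, only the partial result of Theorem~\ref{thmcond} (which requires the extra hypothesis $\beta_A<\delta_I$), the special cases of Sections~\ref{glob_sair} and~\ref{glob_sairs_equal}, and numerical evidence. Your text, appropriately, does not claim to prove it either; it is a research programme, and should be judged as such. As a diagnosis of where the difficulty sits, it is accurate: in the paper's own estimates the binding constraint is $\bar h_4=\beta_A-\delta_I<0$ (and, more weakly, $\bar h_1=\beta_A-\delta_A-\alpha-\delta_I<0$), while $\bar h_2,\bar h_3<0$ follow from the choice of $c$ alone; and your observation that $\beta_A S^{*}$ can approach $\alpha+\delta_A+\mu$ (e.g.\ as $\beta_I\to0$) while $\mathcal R_0>1$ correctly shows that no refinement of the absorbing set can rescue the bound $\beta_A S\le\beta_A(1-\eps)$ when $\beta_A\ge\delta_I$. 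Both of your proposed routes --- a richer weight matrix $P$ in the Lu--Lu/Li--Muldowney framework, or a composite Volterra-plus-quadratic Lyapunov function on the full four-dimensional system handling the unbalanced $S\leftrightarrow R$ cycle in the sense of \cite{shuai2013global} --- are the natural ones and are consistent with how the analogous SEIRS results were eventually obtained.

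That said, to be clear about status: nothing in your proposal closes the gap. Neither the candidate $P$ nor the correction term $W(S,R)$ is exhibited, and no computation is carried out showing that the problematic $\beta_A S$ contribution can actually be routed off the diagonal or cancelled in $\dot V$; the crux you identify at the end is precisely the open problem, restated. So the conjecture remains unproved after your text, exactly as it does in the paper. If you pursue route (i), note additionally that any non-diagonal $P$ must keep $\|P^{-1}\|$ uniformly bounded on the compact absorbing set $K$ and preserve the sign structure needed for the weighted column/row estimates in (H4), which is where naive off-diagonal couplings between the first and fourth compound coordinates tend to fail.
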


\subsection{SAIRS without vaccination ($\nu =0$).}\label{no_vax}
Let us note that in the SAIRS-type models proposed so far, we have obtained results for the global stability of the DFE equilibrium when $\mathcal{R}_0 <1$ and for the global stability of the endemic equilibrium when $\mathcal{R}_0>1$ (plus eventually a further conditions), but we are not able to study the stability of our system in the case $\mathcal{R}_0=1$. However, if we consider the SAIRS model without vaccination, i.e. the model \eqref{sairs3_s} with $\nu =0$, we are able to study also the case $\mathcal{R}_0=1$. %gives a sharp threshold \cite{shuai2013global}. Precisely, i
From (\ref{R0}), in the case $\nu =0$, we have 
\begin{equation}\label{R0novax}
    \mathcal{R}_0 = \left ( \beta_A + \dfrac{\alpha \beta_I}{\delta_I + \mu}\right) \dfrac{1}{(\alpha+\delta_A + \mu)},
\end{equation}
the DFE is $x_0=(1,0,0)$,
and we obtain the following result.

\begin{theorem}
The disease-free equilibrium $x_0$ is global asymptotically stable if $\mathcal{R}_0\leq 1$.
\end{theorem}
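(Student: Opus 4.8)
The plan is to give a single Lyapunov–LaSalle argument covering the whole range $\mathcal{R}_0\le 1$, and then to handle Lyapunov stability separately, since that is the only genuinely new difficulty: for $\mathcal{R}_0<1$ the statement is already contained in Theorem~\ref{glob_attr} (case $\nu=0$), whereas at $\mathcal{R}_0=1$ the Jacobian at $x_0$ has a zero eigenvalue, so linearisation is inconclusive and Theorem~\ref{locDFE} cannot be invoked. I would start from the linear functional $L=(\delta_I+\mu)A+\beta_I I$, which is nonnegative on $\Gamma$ and vanishes exactly on the segment $\{(S,0,0):0\le S\le 1\}$. Differentiating along \eqref{sairs3_s} with $\nu=0$, substituting the expressions for $\dot A$ and $\dot I$, and adding and subtracting $(\delta_I+\mu)\beta_A A$ in order to factor out $S-1$, one obtains
\begin{equation*}
\dot L=(\delta_I+\mu)(S-1)\big(\beta_A A+\beta_I I\big)+(\delta_I+\mu)(\alpha+\delta_A+\mu)(\mathcal{R}_0-1)A,
\end{equation*}
where the coefficient of $A$ has been rewritten using the expression \eqref{R0novax} for $\mathcal{R}_0$. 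On $\Gamma$ one has $0\le S\le 1$ and $A,I\ge 0$, so the first summand is $\le 0$, and the second is $\le 0$ exactly because $\mathcal{R}_0\le 1$; hence $\dot L\le 0$ on $\Gamma$. This computation is routine.

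For global attractivity I would apply LaSalle's invariance principle on the compact positively invariant set $\Gamma$ (Theorem~\ref{invset}). On $\Gamma$ the set $\{\dot L=0\}$ reduces to $\{A=I=0\}$: when $\mathcal{R}_0<1$ the second summand forces $A=0$ and then the first forces $I=0$; when $\mathcal{R}_0=1$ the only summand $(S-1)(\beta_A A+\beta_I I)$ vanishes only if $S=1$ or $\beta_A A+\beta_I I=0$, and on $\Gamma$ both possibilities give $A=I=0$. Since the face $\{A=I=0\}$ is invariant and there the flow obeys $\dot S=(\mu+\gamma)(1-S)>0$ for $S<1$, the largest invariant subset of $\{\dot L=0\}$ is the singleton $\{x_0\}$, so every solution starting in $\Gamma$ converges to $x_0$.

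It remains to prove Lyapunov stability of $x_0$, the delicate point at $\mathcal{R}_0=1$. Here I would exploit the monotonicity of $L$ rather than linearisation. Because $\dot L\le 0$, every sublevel set $\{L\le\eta\}\cap\Gamma$ is positively invariant, and on it $\beta_A A+\beta_I I$ and $A+I$ are bounded by a parameter-dependent constant times $\eta$ (since $A\le L/(\delta_I+\mu)$ and $I\le L/\beta_I$). Feeding these bounds into $\dot{(1-S)}=-(\mu+\gamma)(1-S)+(\beta_A A+\beta_I I)S+\gamma(A+I)$ shows that, for a suitable constant $C$ depending only on the parameters, the sets $W_\eta=\{x\in\Gamma:\ L(x)\le\eta,\ 1-S(x)\le C\eta\}$ are positively invariant, form a neighbourhood basis of $x_0$ in $\Gamma$, and shrink to $\{x_0\}$ as $\eta\downarrow 0$; this yields stability of $x_0$.

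Combining stability with the global attractivity established via LaSalle gives global asymptotic stability of $x_0$ on $\Gamma$ whenever $\mathcal{R}_0\le 1$. I expect the main obstacle to be precisely this stability step at the threshold $\mathcal{R}_0=1$: the natural linear Lyapunov candidates fail to be positive definite in $S$, the linearisation is non-hyperbolic, and one really needs the invariant-neighbourhood argument above (or, equivalently, a center-manifold reduction, which I would avoid). Everything else — the sign of $\dot L$, the identification of the invariance set, and the reduction of the $\mathcal{R}_0<1$ case to Theorem~\ref{glob_attr} — is the standard Lyapunov/LaSalle machinery.
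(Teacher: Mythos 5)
Your proposal is correct, and it reaches the conclusion by a genuinely different route from the paper. The paper follows the matrix-theoretic construction of Guo and Li: it writes $\tfrac{dY}{dt}=C\,(M(S)-I_{2\times 2})Y$ for $Y=(A,I)^T$ and takes $V=wC^{-1}Y$ with $w$ the left Perron eigenvector of $M(S_0)$; unwinding that construction, $V$ is (up to a positive factor) the linear function $\mathcal{R}_0(\delta_I+\mu)A+\beta_I I$, so your $L=(\delta_I+\mu)A+\beta_I I$ coincides with the paper's Lyapunov function exactly at the threshold $\mathcal{R}_0=1$ and differs only in the weight on $A$ otherwise. What your approach buys is transparency: the identity $\dot L=(\delta_I+\mu)(S-1)(\beta_A A+\beta_I I)+(\delta_I+\mu)(\alpha+\delta_A+\mu)(\mathcal{R}_0-1)A$ (which I have checked) makes the sign structure and the role of $\mathcal{R}_0\le 1$ visible by hand, at the cost of having to guess the coefficients rather than derive them from Perron theory; the paper's construction generalizes more readily to higher-dimensional infective subsystems. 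Your identification of the largest invariant subset of $\{\dot L=0\}$ as $\{x_0\}$ matches the paper's (both rely on the fact that backward orbits in the face $\{A=I=0\}$ with $S<1$ leave $\Gamma$, which you should perhaps state more explicitly since the face itself is forward invariant). Finally, your separate invariant-neighbourhood argument for Lyapunov stability addresses a point the paper passes over in silence: both Lyapunov functions are only positive semi-definite, vanishing on the whole disease-free face, so LaSalle alone delivers attractivity and not stability; your sets $W_\eta$ close that gap cleanly, and this is the one place where your write-up is actually more complete than the paper's.
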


\begin{proof}
We follow the idea in \cite[Prop. 3.1]{guo2006global}.
Let
\begin{equation*}
C = \left( \begin{matrix} 
\alpha + \delta_A + \mu & 0\\ 
- \alpha & \delta_I + \mu
\end{matrix} \right), 
\end{equation*}
and 
$$Y=(A,I)^T.$$
Thus, we have
$$\frac{ dY}{dt} = \left(C  (M(S)-I_{2 \times 2})\right)Y, $$
where 
\begin{equation*}
M(S) = \left( \begin{matrix} 
\frac{\beta_A S}{\alpha + \delta_A + \mu}  & \frac{\beta_I S}{\alpha + \delta_A + \mu} \\ \\
\frac{\alpha \beta_A S}{(\delta_I+\mu)(\alpha + \delta_A + \mu)}  & \frac{\alpha \beta_I S}{(\delta_I+\mu)(\alpha + \delta_A + \mu)}
\end{matrix} \right).
\end{equation*}

Since, in this case, $S_0=1$, we have that $ 0 \leq S\leq S_0$, and $0\leq M(S) \leq M(S_0)$, meaning that each element of $M(S)$ is less than or equal to the corresponding element of $M(S_0)$.\\ %$M(S)$ is irreducible for all $S>0$, thus $\rho(M(S)) < \rho(M(S_0))$, provided $S \neq S_0$ (see, e.g., \cite{varga1962matrix}).\\
At this point, let us consider the positive-definite function
$$V(Y)=w\;C^{-1}Y,$$
where $w$ is the left-eigenvector of $M(S_0)$ corresponding to $\rho(S_0)$; since $M(S_0)$ is a positive matrix, by Perron's theorem, $w>0$. It is easy to see that $\rho(M(S_0))=\mathcal{R}_0$ in \eqref{R0novax}, thus if $\mathcal{R}_0\leq 1$, we have
 \begin{align*}
     \frac{dV}{dt} &=w \;C^{-1} \frac{dY}{dt} = w \left( M(S)-I_{2 \times 2} \right)Y\\
     &\leq w \left( M(S_0)-I_{2 \times 2} \right)Y 
     = (\rho(M(S_0))-1)w Y \leq 0.
 \end{align*}
If $\mathcal{R}_0<1$, then  $\frac{dV}{dt}=0 \iff Y=0$. If $\mathcal{R}_0=1$, then %$w(M(S)-Id)Y=0$ implies 
\begin{equation}\label{wM}
wM(S)Y=wY.    
\end{equation}
Now, if $S \neq S_0$, $w M(S) < wM(S_0)=\rho(M(S_0))w=w$: Thus, \eqref{wM} holds if and only if $Y=0$. If $S=S_0$, $wM(S)=wM(S_0)=w$, and $\frac{dV}{dt}=0$ if $S=S_0$ and $Y=0$. It can be seen that
the maximal compact invariant set where $\frac{dV}{dt}=0$ is the singleton $\{x_0\}$. Thus, by the LaSalle invariance principle the DFE $x_0$ is globally asymptotically stable if $\mathcal{R}_0 \leq 1$.
\end{proof}

\section{Numerical analysis}\label{num_analysis_sec}

In this Section, we provide numerous realizations of system (\ref{sairs_s}). In particular, to back the claim we made in Conjecture \ref{conj}, in all the figures we chose $\beta_A>\delta_I$, with the exception of Figure \ref{fig:tnucond}, still obtaining numerical convergence towards the endemic equilibrium when $\mathcal{R}_0>1$.\\
Considering all the other parameters to be fixed, $\mathcal{R}_0$ becomes a linear function of $\beta_A$ and $\beta_I$; in particular, the line $\mathcal{R}_0(\beta_A,\beta_I)=1$ is clearly visible in all the subfigures of Figure \ref{fig:surf}, in which we visualize the equilibrium values of $S,A,I,R$ as functions of $\beta_A$ and $\beta_I$.
When $R_0<1$, the values of $\beta_A$ and $\beta_I$ do not influence the value of the equilibrium point \eqref{eq:DFE}, and the value of the fraction of individuals in each compartment remains constant. For values of $R_0>1$, we can see the the influence of the infection parameters on each components of the endemic equilibrium (see (\ref{Astar}), (\ref{Sstar}), (\ref{Istar})).

\begin{figure}[H]
    \begin{subfigure}{.49\textwidth}
        \centering
        \includegraphics[width=0.9\textwidth]{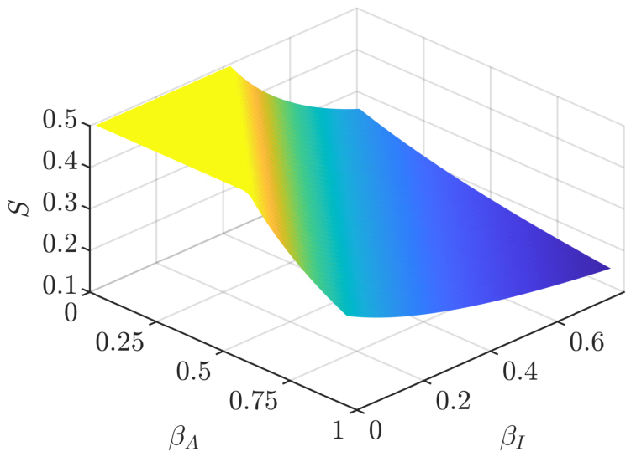}
        \caption{}\label{fig:surfS}
    \end{subfigure}\hfill
    \begin{subfigure}{0.49\textwidth}
        \centering
        \includegraphics[width=0.9\textwidth]{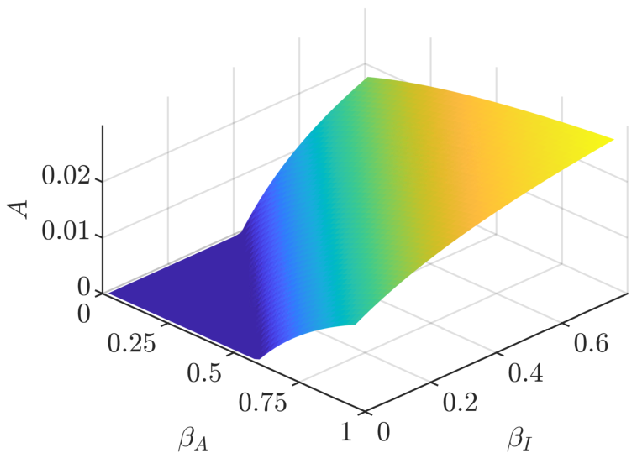}
        \caption{}\label{fig:surfA}
    \end{subfigure}
    \begin{subfigure}{0.49\textwidth}
        \centering
        \includegraphics[width=0.9\textwidth]{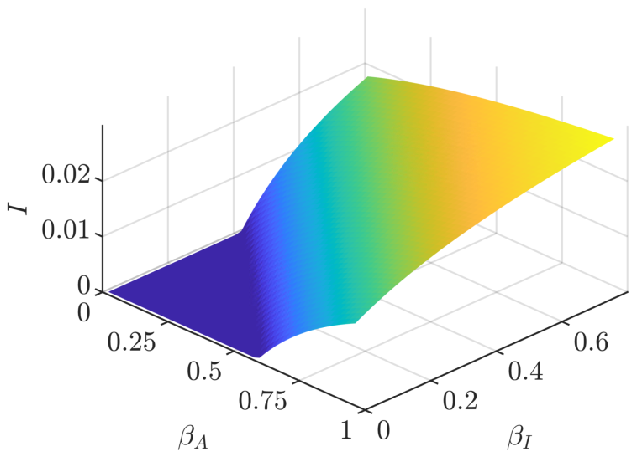}
        \caption{}\label{fig:surfI}
    \end{subfigure}\hfill
    \begin{subfigure}{0.49\textwidth}
        \centering
        \includegraphics[width=0.9\textwidth]{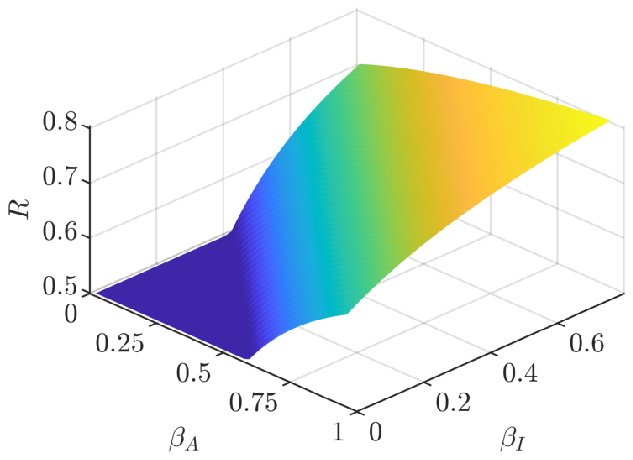}
        \caption{}\label{fig:surfR}
    \end{subfigure}
    \caption{Asymptotic values of $S$, $A$, $I$, and $R$ as a function of $\beta_A$ and $\beta_I$. Values of the parameters: $\mu=1/(70\cdot365)$, meaning an average lifespan of 70 years; $\beta_A \in [0.01,0.8]$ $\beta_I \in [0.01,0.95]$, $\nu=0.01$, $\gamma=1/100$, meaning the immunity lasts on average 100 days; $\alpha=0.15$, $\delta_A=0.1$, $\delta_I=0.15$.}
\label{fig:surf}
\end{figure}

Figures \ref{fig:stgamma}, \ref{fig:atgamma}, \ref{fig:itgamma} and \ref{fig:rtgamma} confirm our analytical results on the asymptotic values of the fraction of individuals in each compartment. In particular, the endemic equilibrium value of $S$ (\ref{Sstar}) does not depend on $\gamma$, the loss of immunity rate, as shown by the time series corresponding to $\gamma=0.01$, $0.02$ and $0.05$, whereas the disease free equilibrium value of $S$ (\ref{eq:DFE}), corresponding to the $\gamma=0.001$ plot, does. %Decreasing 
Increasing the value of $\gamma$, which corresponds to decreasing the average duration $1/\gamma$ of the immunity time-window, results in %smaller 
bigger asymptotic values for the asymptomatic and symptomatic infected population $A$ and $I$ and in a %bigger
smaller asymptotic value for the recovered population $R$. This trend is quite intuitive: indeed, by keeping the others parameters fixed, if the average immune period
decreases (i.e., $\gamma$ increases), a removed individual quickly return to the susceptible state, hence the behavior of the SAIRS model approaches that of a SAIS model.

\begin{figure}[H]
    \centering
    \begin{subfigure}{0.49\textwidth}
        \centering
        \includegraphics[width=0.9\textwidth]{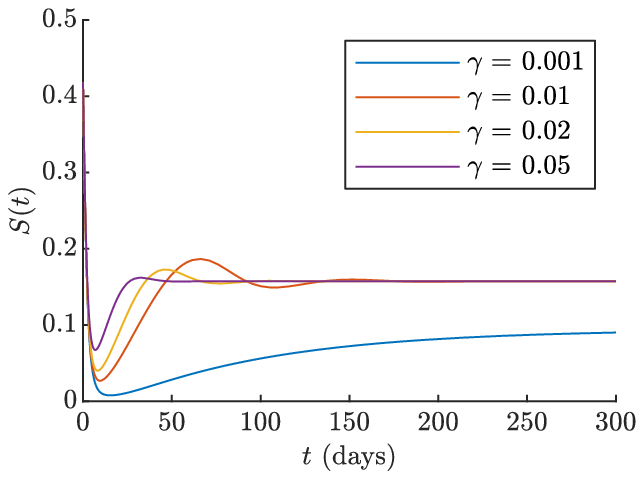}
        \caption{}\label{fig:stgamma}
    \end{subfigure}\hfill
    \begin{subfigure}{0.49\textwidth}
        \centering
        \includegraphics[width=0.9\textwidth]{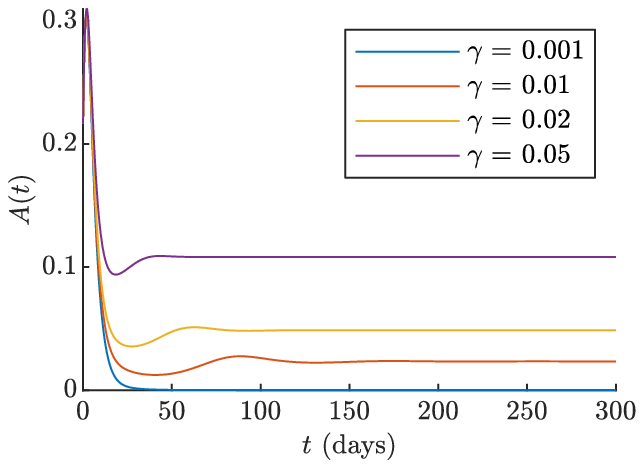}
        \caption{}\label{fig:atgamma}
    \end{subfigure}
    \begin{subfigure}{0.49\textwidth}
        \centering
        \includegraphics[width=0.9\textwidth]{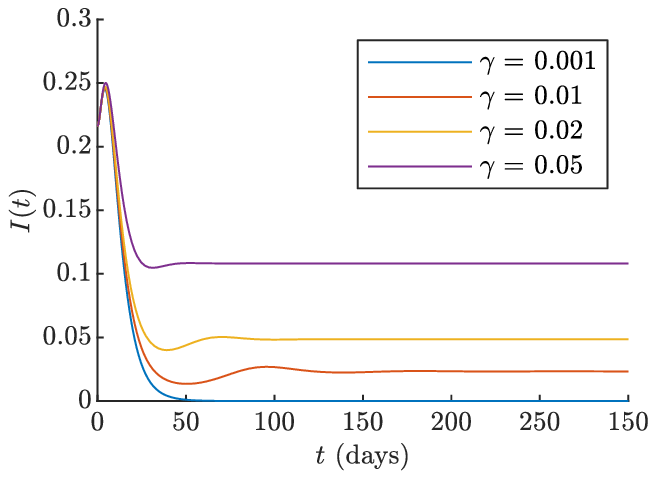}
        \caption{}\label{fig:itgamma}
    \end{subfigure}\hfill
    \begin{subfigure}{0.49\textwidth}
        \centering
        \includegraphics[width=0.9\textwidth]{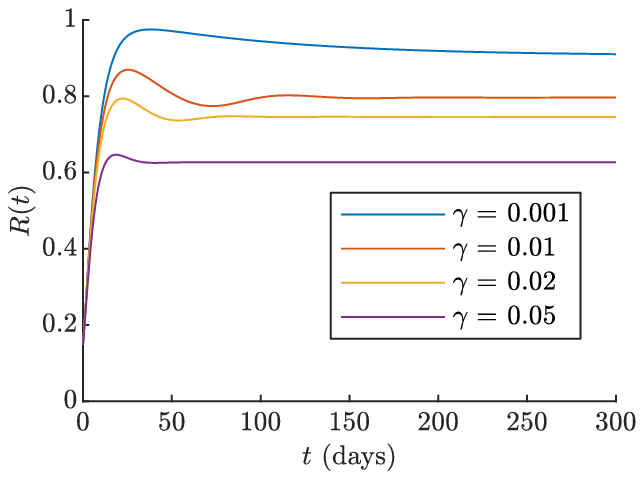}
        \caption{}\label{fig:rtgamma}
    \end{subfigure}
    \caption{Behavior of system (\ref{sairs_s}) as $\gamma$, the rate of loss of immunity, varies. Values of the parameters: $\mu=1/(70\cdot365)$, meaning an average lifespan of 70 years; $\beta_A =0.8$ $\beta_I =0.95$, $\nu=0.01$, $\gamma$ varying as shown; $\alpha=0.15$, $\delta_A=0.125$, $\delta_I=0.15$.}
    \label{fig:tgamma}
\end{figure}

Next, we explore the effect of changing $\alpha$, {the rate of symptoms onset}, in three scenarios: equally infectious asymptomatic and symptomatic individuals ($\beta_A=\beta_I$), in Figure \ref{fig:talpha99}; asymptomatic individuals more infectious than symptomatic individuals ($\beta_A>\beta_I$) (this case can be of interest if we consider that asymptomatic individuals can, in principle, move and spread the infection more than symptomatic ones) in Figure \ref{fig:talpha95}; and vice-versa ($\beta_A<\beta_I$), in Figure \ref{fig:talpha59}. If $\mathcal{R}_0>1$, $A^*$ and $I^*$ are related by $A^*=\frac{\delta_I+\mu}{\alpha}I^*$ (\ref{Astar}). This means that, regardless of the values of $\beta_A$ and $\beta_I$, $A^*>I^*$ if and only if $\frac{\delta_I+\mu}{\alpha}>1$. This is evident in Figures \ref{fig:at99}, \ref{fig:at95} and \ref{fig:at59}, where the smallest value of that ratio, corresponding to $\alpha=0.9$, is smaller than $1$, results in $I^*>A^*$; the biggest value of that ratio, and the only one significantly bigger than $1$ is attained for $\alpha=0.01$, and results in $I^*<A^*$.
Increasing $\alpha$ leads to a smaller asymptotic value for $A$, and a bigger asymptotic value for $I$.
Effectively, by keeping fixed the other parameters and increasing $\alpha$ leads to a decreasing of the average time-period before developing symptoms, thus the behavior of the SAIRS model approaches that of the SIRS one, as $\alpha$ increases.

Finally, in Figure \ref{fig:tnucond}, we compare the effect of varying $\nu$, the vaccination rate, on the {epidemic} dynamics. In particular, the parameter values chosen satisfy the assumption of Theorem \ref{thmcond}, i.e. $\mathcal{R}_0>1$ and simultaneously $\beta_A<\delta_I$. We observe that the asymptotic values of $A$ and $I$ are decreasing in $\nu$, whereas the endemic equilibrium value of $S$ is independent from this parameter, as we expect from (\ref{Sstar}), and the endemic equilibrium value of $R$ is increasing in $\nu$.

\begin{figure}[H]
    \centering
    \begin{subfigure}{0.49\textwidth}
        \centering
        \includegraphics[width=0.9\textwidth]{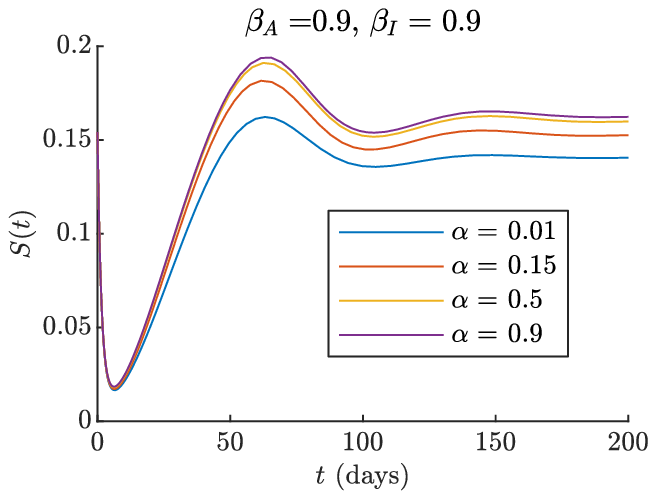}
        \caption{}\label{fig:st99}
    \end{subfigure}\hfill
    \begin{subfigure}{0.49\textwidth}
        \centering
        \includegraphics[width=0.9\textwidth]{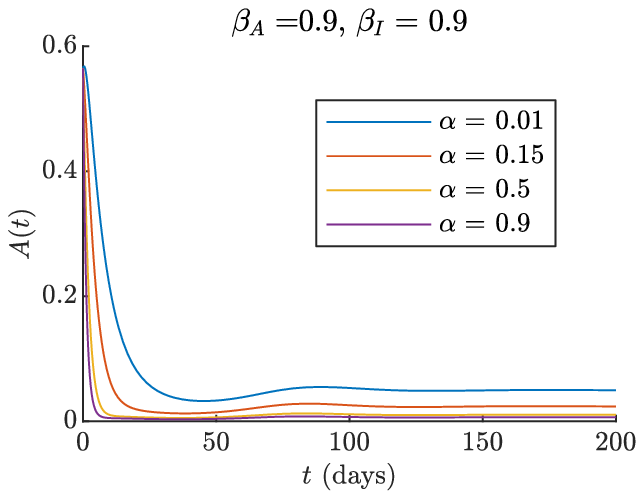}
        \caption{}\label{fig:at99}
    \end{subfigure}
    \begin{subfigure}{0.49\textwidth}
        \centering
        \includegraphics[width=0.9\textwidth]{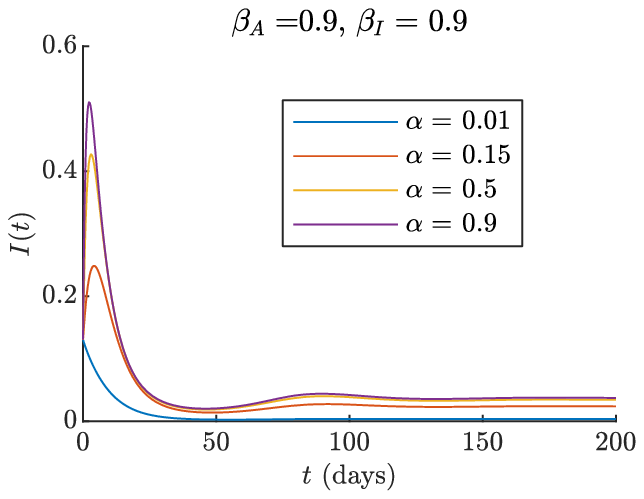}
        \caption{}\label{fig:it99}
    \end{subfigure}\hfill
    \begin{subfigure}{0.49\textwidth}
        \centering
        \includegraphics[width=0.9\textwidth]{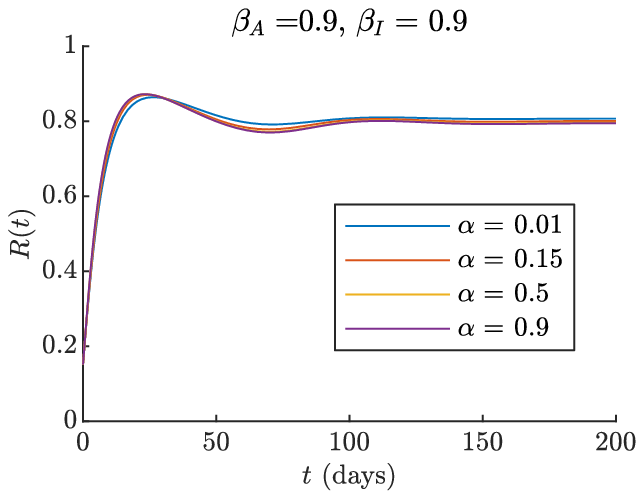}
        \caption{}\label{fig:rt99}
    \end{subfigure}
    \caption{Behavior of system (\ref{sairs_s}) as $\alpha$, the rate of symptoms onset, varies. Values of the parameters: $\mu=1/(70\cdot365)$, meaning an average lifespan of 70 years; $\beta_A =\beta_I =0.9$, $\nu=0.01$, $\gamma=1/100$, meaning the immunity lasts on average 100 days; $\alpha$ varying as shown, $\delta_A=0.125$, $\delta_I=0.15$.}
    \label{fig:talpha99}
\end{figure}

\begin{figure}[H]
    \centering
    \begin{subfigure}{0.49\textwidth}
        \centering
        \includegraphics[width=0.9\textwidth]{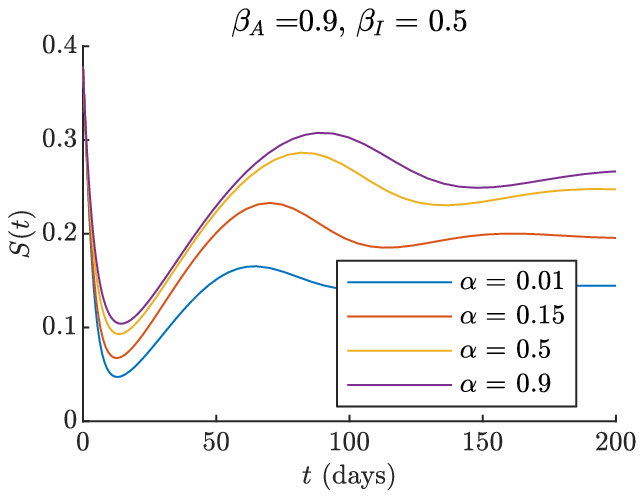}
        \caption{}\label{fig:st95}
    \end{subfigure}\hfill
    \begin{subfigure}{0.49\textwidth}
        \centering
        \includegraphics[width=0.9\textwidth]{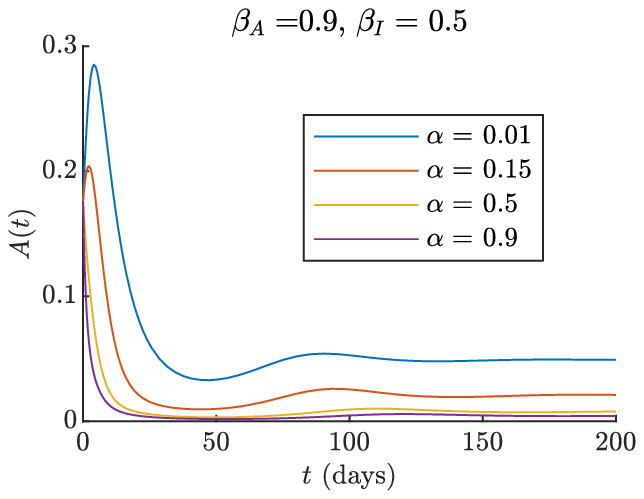}
        \caption{}\label{fig:at95}
    \end{subfigure}
    \begin{subfigure}{0.49\textwidth}
        \centering
        \includegraphics[width=0.9\textwidth]{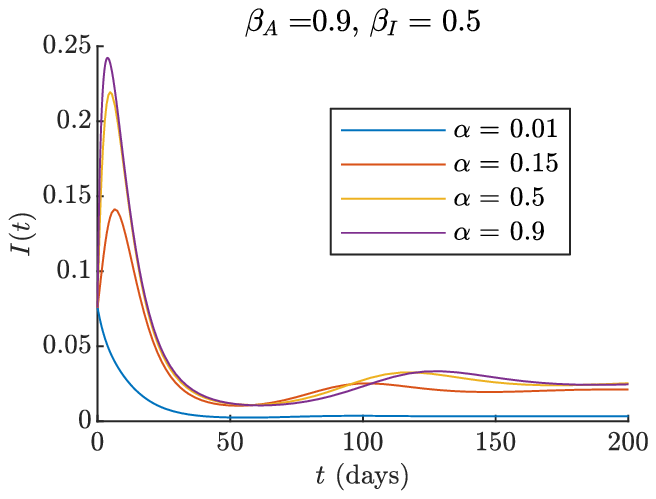}
        \caption{}\label{fig:it95}
    \end{subfigure}\hfill
    \begin{subfigure}{0.49\textwidth}
        \centering
        \includegraphics[width=0.9\textwidth]{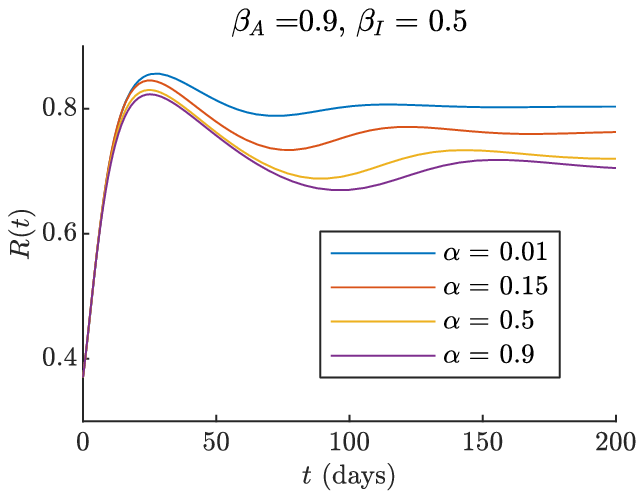}
        \caption{}\label{fig:rt95}
    \end{subfigure}
    \caption{Behavior of system (\ref{sairs_s}) as $\alpha$, the rate of symptoms onset, varies. Values of the parameters: $\mu=1/(70\cdot365)$, meaning an average lifespan of 70 years; $\beta_A =0.9$ $\beta_I =0.5$, $\nu=0.01$, $\gamma=1/100$, meaning the immunity lasts on average 100 days; $\alpha$ varying as shown, $\delta_A=0.125$, $\delta_I=0.15$.}
    \label{fig:talpha95}
\end{figure}

\begin{figure}[H]
    \centering
    \begin{subfigure}{0.49\textwidth}
        \centering
        \includegraphics[width=0.9\textwidth]{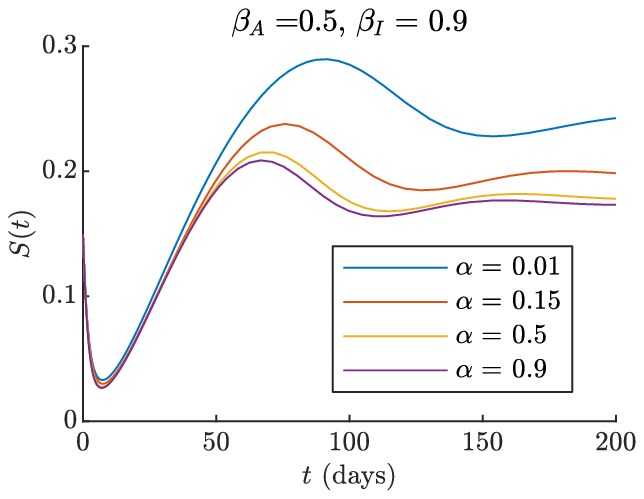}
        \caption{}\label{fig:st59}
    \end{subfigure}\hfill
    \begin{subfigure}{0.49\textwidth}
        \centering
        \includegraphics[width=0.9\textwidth]{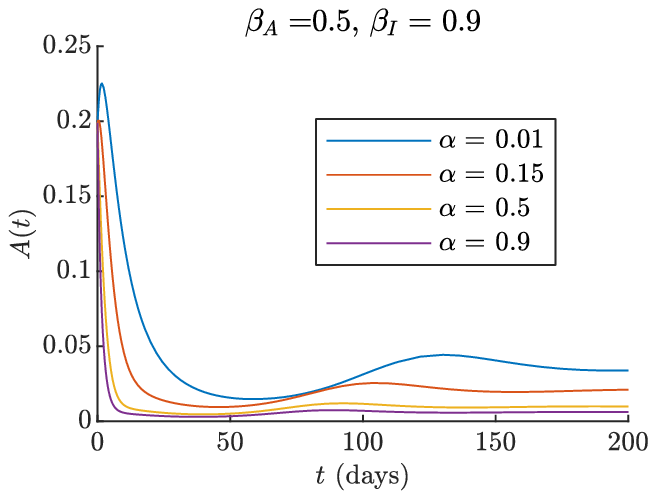}
        \caption{}\label{fig:at59}
    \end{subfigure}
    \begin{subfigure}{0.49\textwidth}
        \centering
        \includegraphics[width=0.9\textwidth]{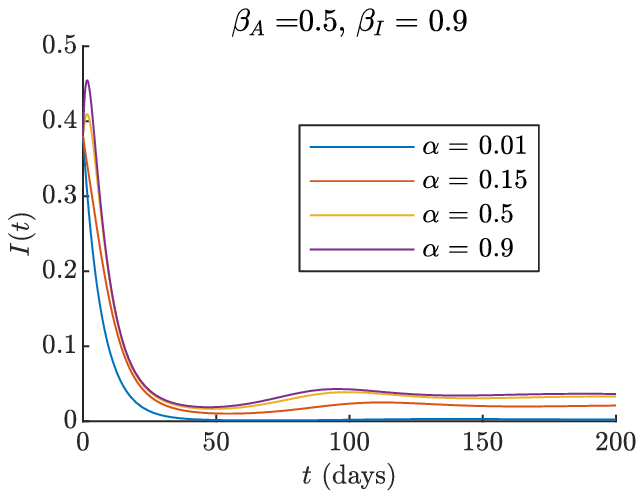}
        \caption{}\label{fig:it59}
    \end{subfigure}\hfill
    \begin{subfigure}{0.49\textwidth}
        \centering
        \includegraphics[width=0.9\textwidth]{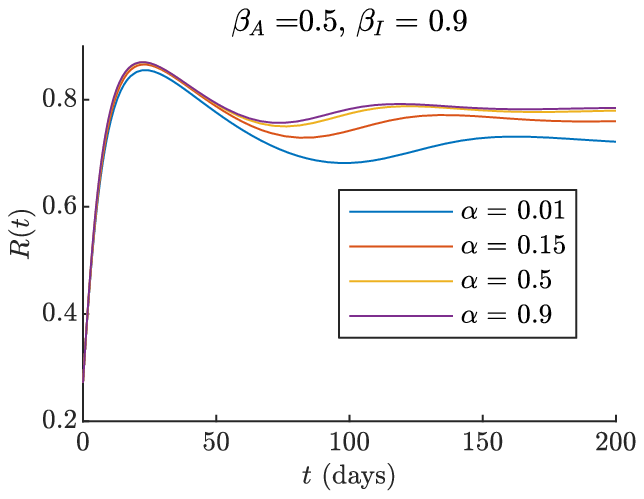}
        \caption{}\label{fig:rt59}
    \end{subfigure}
    \caption{Behavior of system (\ref{sairs_s}) as $\alpha$, the rate of symptoms onset, varies. Values of the parameters: $\mu=1/(70\cdot365)$, meaning an average lifespan of 70 years; $\beta_A =0.5$ $\beta_I =0.9$, $\nu=0.01$, $\gamma=1/100$, meaning the immunity lasts on average 100 days; $\alpha$ varying as shown, $\delta_A=0.125$, $\delta_I=0.15$.}
    \label{fig:talpha59}
\end{figure}

\begin{figure}[H]
    \centering
    \begin{subfigure}{0.49\textwidth}
        \centering
        \includegraphics[width=0.9\textwidth]{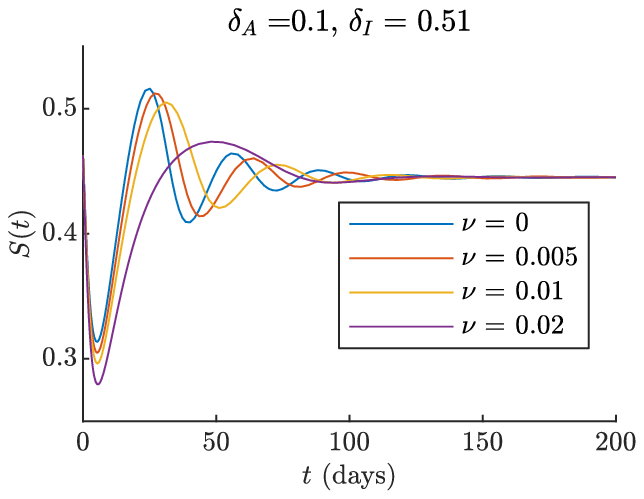}
        \caption{}\label{fig:stnu}
    \end{subfigure}\hfill
    \begin{subfigure}{0.49\textwidth}
        \centering
        \includegraphics[width=0.9\textwidth]{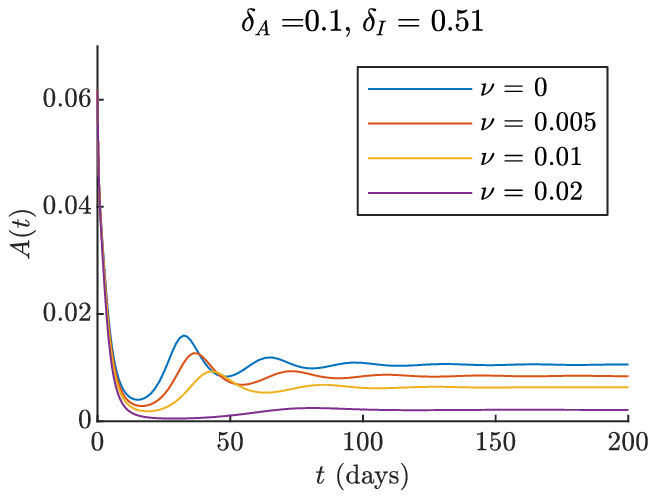}
        \caption{}\label{fig:atnu}
    \end{subfigure}
    \begin{subfigure}{0.49\textwidth}
        \centering
        \includegraphics[width=0.9\textwidth]{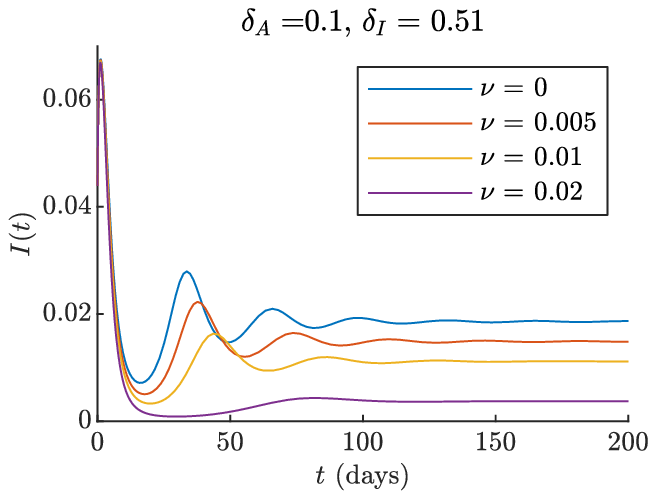}
        \caption{}\label{fig:itnu}
    \end{subfigure}\hfill
    \begin{subfigure}{0.49\textwidth}
        \centering
        \includegraphics[width=0.9\textwidth]{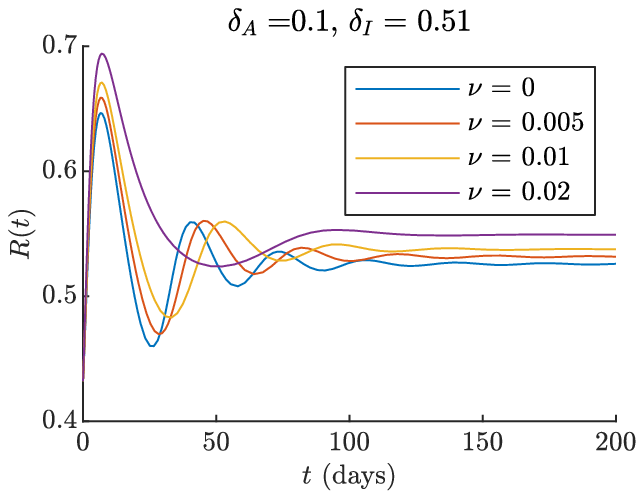}
        \caption{}\label{fig:rtnu}
    \end{subfigure}
    \caption{Behavior of system (\ref{sairs_s}) as $\nu$, the vaccination rate, varies. Values of the parameters: $\mu=1/(70\cdot365)$, meaning an average lifespan of 70 years; $\beta_A =0.5$ $\beta_I =0.9$, $\nu$ varying as shown, $\gamma=1/50$, meaning the immunity lasts on average 50 days; $\alpha=0.9$, $\delta_A=0.1$, $\delta_I=0.51$. The condition $\beta_A<\delta_I$ is satisfied.}
    \label{fig:tnucond}
\end{figure}

\section{Conclusions}\label{conclusion_sec}
We analyzed the behavior of an SAIRS compartmental model with vaccination. We determined the value of the basic reproduction number $\mathcal{R}_0$; then, we proved that the disease-free equilibrium is globally asymptotically stable, i.e..~the disease {eventually} dies out, if $\mathcal{R}_0<1$. Moreover, in the SAIRS-type model without vaccination ($\nu = 0$), we were able to generalize the result on the global asymptotic stability of the DFE also in the case $\mathcal{R}_0=1$.

Furthermore, we proved the uniform persistence of the disease and the existence of a unique endemic equilibrium if $\mathcal{R}_0>1$. Later, we analyzed the stability of this endemic equilibrium for some subcases of the model. 

The first case describes a disease which confers permanent immunity, i.e. $\gamma = 0$: the model reduces to an SAIR. In this framework, we answered the open problem presented in \cite{ansumali2020modelling}, including the additional complexity of vaccination: we proved the global asymptotic stability of the endemic equilibrium when $\mathcal{R}_0>1$.

We then proceeded to extend the results provided in \cite{robinson2013model} on the local stability analysis {for a SAIRS-type model}. We first considered the SAIRS model with the assumption that both asymptomatic and symptomatic infectious have the same transmission rate and recovery rate, i.e. $\beta_A= \beta_I$ and $\delta_A=\delta_I$, respectively. We were able to show that the endemic equilibrium is globally asymptotically stable if $\mathcal{R}_0>1$. Moreover, we analyzed the model without restrictions; we used the geometric approach proposed in \cite{lu2017geometric} to find the conditions under which the endemic equilibrium is globally asymptotically stable. We proved the global stability in the case $\mathcal{R}_0>1$ and $\beta_A < \delta_I$. 

{We leave, as an open problem, the global asymptotic stability of the endemic equilibrium without any restriction on the parameters}: we conjecture that the global asymptotic stability for the endemic equilibrium only requires $\mathcal{R}_0>1$, as our numerical simulations suggest. 

{Many generalizations and investigations of our model are possible.
For example,} we considered the vital dynamics without distinguish between natural death and disease related deaths; an interesting, although complex, generalization of our model could explore the implications of including disease-induced mortality.

A natural extension of our SAIRS model could take into account different groups of individual among which an epidemic can spread. One modelling approach for this are multi-group compartmental models. Other more realistic extensions may involve a greater number of compartments, for example the ``Exposed" group, or time-dependent parameters which can describe the seasonality of a disease or some response measures from the population, as well as non-pharmaceutical interventions.

\section*{Acknowledgments}
The authors would like to thank Prof.~Andrea Pugliese and Prof.~Bruno Buonomo for the fruitful discussions and suggestions during the writing of this paper. The authors thank also Prof.~Marco Broccardo for discussions and a careful reading of the paper draft.\\

The research of Stefania Ottaviano was supported by the University of Trento in the frame ``SBI-COVID - Squashing the business interruption curve while flattening pandemic curve (grant 40900013)''. The research of Mattia Sensi was supported by the TUDelft project ``Epidemics over Human Contact Graphs''.

\bibliographystyle{ieeetr}
\bibliography{biblio1}

\end{document}